\theoremstyle{plain}
\newtheorem{teo}{Theorem}[section]
\newtheorem{pro}[teo]{Proposition}
\theoremstyle{remark}
\newtheorem{rem}[teo]{Remark}
\DeclareRobustCommand{\gaussk}{\DOTSB\gaussk@\slimits@}
\newcommand{\gaussk@}{\mathop{\vphantom{\sum}\mathpalette\bigcal@{K}}}
\newcommand{\bigcal@}[2]{%
	\vcenter{\m@th
		\sbox\z@{$#1\sum$}%
		\dimen@=\dimexpr\ht\z@+\dp\z@
		\hbox{\resizebox{!}{0.8\dimen@}{$\mathcal{K}$}}%
	}%
}
\newcommand{\cfracplus}{\mathbin{\cfracplus@}}
\newcommand{\cfracplus@}{%
	\sbox\z@{$\dfrac{1}{1}$}%
	\sbox\tw@{$+$}%
	\raisebox{\dimexpr\dp\tw@-\dp\z@\relax}{$+$}%
}
\newcommand{\cfracdots}{\mathord{\cfracdots@}}
\newcommand{\cfracdots@}{%
	\sbox\z@{$\dfrac{1}{1}$}%
	\sbox\tw@{$+$}%
	\raisebox{\dimexpr\dp\tw@-\dp\z@\relax}{$\cdots$}%
}
\newcommand*{\relrelbarsep}{.386ex}
\newcommand*{\relrelbar}{%
	\mathrel{%
		\mathpalette\@relrelbar\relrelbarsep
	}%
}
\newcommand*{\@relrelbar}[2]{%
	\raise#2\hbox to 0pt{$\m@th#1\relbar$\hss}%
	\lower#2\hbox{$\m@th#1\relbar$}%
}
\providecommand*{\rightrightarrowsfill@}{%
	\arrowfill@\relrelbar\relrelbar\rightrightarrows
}
\providecommand*{\leftleftarrowsfill@}{%
	\arrowfill@\leftleftarrows\relrelbar\relrelbar
}
\providecommand*{\xrightrightarrows}[2][]{%
	\ext@arrow 0359\rightrightarrowsfill@{#1}{#2}%
}
\providecommand*{\xleftleftarrows}[2][]{%
	\ext@arrow 3095\leftleftarrowsfill@{#1}{#2}%
}
\newcommand*\pFqskip{8mu}
\newcommand*\pFq{\begingroup
	\catcode`\,\active
	\def ,{\mskip\pFqskip\relax}%
	\dopFq
}
\def\dopFq#1#2#3#4#5{%
	{}_{#1}F_{#2}\biggl[\genfrac..{0pt}{}{#3}{#4};#5\biggr]%
	\endgroup
}
\tikzstyle{block} = [draw, rectangle, 
\begin{document}

	\title[Bidiagonal factorization of the  multiple Hahn recurrence matrix]{Bidiagonal factorization of the  recurrence matrix for the  Hahn  multiple orthogonal polynomials }
\author[A Branquinho]{Amílcar Branquinho$^{1}$}
\address{$^1$CMUC, Departamento de Matemática,
	Universidade de Coimbra, 3001-454 Coimbra, Portugal}
\email{$^1$ajplb@mat.uc.pt}

\author[JEF Díaz]{Juan EF Díaz$^{2}$}
\address{$^2$CIDMA, Departamento de Matemática, Universidade de Aveiro, 3810-193 Aveiro, Portugal}
\email{$^2$juan.enri@ua.pt}

\author[A Foulquié]{Ana Foulquié-Moreno$^{3}$}
\address{$^3$CIDMA, Departamento de Matemática, Universidade de Aveiro, 3810-193 Aveiro, Portugal}
\email{$^3$foulquie@ua.pt}

\author[M Mañas]{Manuel Mañas$^{4}$}
\address{$^4$Departamento de Física Teórica, Universidad Complutense de Madrid, Plaza Ciencias 1, 28040-Madrid, Spain 
}
\email{$^4$manuel.manas@ucm.es}

\keywords{Multiple orthogonal polynomials, hypergeometric series, Hessenberg matrices, recursion matrix,  Hahn, Laguerre, Meixner, Jacobi--Piñeiro, AT systems}

\subjclass{42C05, 33C45, 33C47, 47B39, 47B36}

\begin{abstract}
This paper explores a factorization using bidiagonal matrices of the recurrence matrix of Hahn multiple orthogonal polynomials. The factorization is expressed in terms of ratios involving the generalized hypergeometric function ${}_3F_2$ and is proven using recently discovered contiguous relations. 
Moreover, employing the multiple Askey scheme, a bidiagonal factorization is derived for the Hahn descendants, including Jacobi-Piñeiro, multiple Meixner (kinds I and II), multiple Laguerre (kinds I and II), multiple Kravchuk, and multiple Charlier, all represented in terms of hypergeometric functions.
For the cases of multiple Hahn, Jacobi-Piñeiro, Meixner of kind II, and Laguerre of kind I, where there exists a region where the recurrence matrix is nonnegative, subregions are identified where the bidiagonal factorization becomes a positive bidiagonal factorization.
\end{abstract}

\maketitle

\section{Introduction}

In this paper, we will deal with tetradiagonal Hessenberg matrices of the type:
\begin{align}\label{eq:laT}
	T \coloneqq
	\begin{bmatrix}
		b_0 & 1 & 0 &\Cdots[shorten-end=9pt]  & \\
		c_1 & b_1 & 1& \Ddots[shorten-end=9pt] & \\
		d_2 & c_2 & b_2 & \Ddots[shorten-end=9pt] & \\[2pt]
		0 & d_3 & c_3 & b_3 & \\
		\Vdots[shorten-end=7pt] & \Ddots[shorten-end=-5pt]  & \Ddots[shorten-end=-5pt]  & \Ddots[shorten-end=-5pt]  & \Ddots[shorten-end=10pt] 
	\end{bmatrix},
\end{align}
and  its leading  submatrices, also known as truncations:
\begin{align}\label{eq:laT truncada}
	T_m &\coloneqq
	\begin{bmatrix}
		b_0 & 1 & 0 & \Cdots & & & 0\\
		c_1 & b_1 & 1 & \Ddots & & & \Vdots\\[5pt]
		d_2 & c_2 & b_2 & \Cdots & & & \\
		0 & d_3 & c_3 &\Ddots & \Ddots& & \\
		\Vdots & \Ddots[shorten-end=2pt] &\Ddots[shorten-end=-2pt] & \Ddots[shorten-end=1pt]  & & & 0\\
		& & & & & & 1\\
		0 & \Cdots & & 0& d_{m-1} & c_{m-1} & b_{m-1}
	\end{bmatrix},
	& m\in &\{1,\dots,N+1\}.
\end{align}
In this work, we focus on tetradiagonal Hessenberg matrices, which originate from the theory of multiple orthogonal polynomials \cite{nikishin_sorokin, Ismail, afm}. These matrices are used to represent recurrence relations of the following form:
\begin{align*}
	T
	\begin{bmatrix}
		B^{(0)}(x)\\ B^{(1)}(x)\\ B^{(2)}(x)\\ \vdots
	\end{bmatrix}
	&= x
	\begin{bmatrix}
		B^{(0)}(x)\\ B^{(1)}(x)\\ B^{(2)}(x)\\ \vdots
	\end{bmatrix},
\end{align*}
where $B^{(n)}(x)$ denotes multiple orthogonal polynomials with respect to two weights for $n \in \mathbb{N}_0$ (on the step-line). Our particular focus lies on tetradiagonal Hessenberg matrices that represent the recurrence relations for multiple orthogonal polynomials of Hahn, Meixner (kinds I and II), Kravchuk, Jacobi-Piñeiro, Laguerre (kinds I and II), and Charlier. These multiple orthogonal polynomials are descendants of the Hahn multiple orthogonal polynomials in the Askey scheme \cite{AskeyII}.
\begin{center}
	\begin{tikzpicture}[node distance=3cm]
		\node[fill=SkyBlue!65,block] (a) {Jacobi--Piñeiro}; 
		\node[ fill=SkyBlue!65,block, right of=a] (b) {Meixner II};
		\node[ fill=SkyBlue!65,block, right of = b] (c) {Meixner I};
		\node[ fill=SkyBlue!65,block, right of=c] (f) {Kravchuk}; 
		\node[ fill=SkyBlue!65,block,above of = c, left =.85cm] (d) {Hahn};
		\node[fill=SkyBlue!65,block, below of = b] (e) {Laguerre I};
		\node[fill=SkyBlue!65,block, below of = a] (g) {Laguerre II};
		\node[ fill=SkyBlue!65,block, below of =f] (i) {Charlier};
		\node[ fill=Red!55, block, below of =c] (j) {Hermite};
		\draw[-latex] (d)--(a); 
		\draw[-latex] (d)--(b);
		\draw[-latex] (d)--(c);
		\draw[-latex] (d)--(f);
		\draw[-latex] (a)--(e);
		\draw[-latex] (b)--(e);
		\draw[-latex] (c)--(i);
		\draw[-latex] (c)--(g);
		\draw[-latex] (f)--(i);
		\draw[-latex] (a)--(g);
		\draw[dashed,->] (e)--(j);
		\draw[dashed,->] (a)--(j);
		\draw[dashed,->] (f)--(j);
		\draw[dashed,->] (i)--(j);
		
		\draw (4.5,-4) node
	{\begin{minipage}{0.5\textwidth}
			\begin{center}\small
				\textbf{Descendants of Hahn in the multiple Askey scheme }
			\end{center}
	\end{minipage}};
	\end{tikzpicture}
\end{center}

\enlargethispage{1cm}
The goal of this paper is to provide for such tetradiagonal Hessenberg matrices bidiagonal factorizations of the form:
\begin{align}
	\label{factorizacionbidiagonalmultiple}
	T_{m}&=L_{1,m}L_{2,m}U_{m}, &
	\ L_{1,m}&\coloneq
	\begin{bNiceMatrix}[small]
		1 & 0 & & \cdots & & 0\\
		a_2 & 1 & \Ddots [shorten-end=-2pt] & & & \vdots\\
		0 & a_5 & \ddots& & & \\
		\vdots & \ddots & \Ddots[shorten-end=-5pt]& & & \\
		& & & & & 0\\[6pt]
		0 & \cdots & & 0 & a_{3m-4} & 1
	\end{bNiceMatrix},
	&L_{2,m}&\coloneq
	\begin{bNiceMatrix}[small]
		1 & 0 & & \cdots & & 0\\
		a_3 & 1 & \Ddots [shorten-end=-2pt]  & & & \vdots\\
		0 & a_6 & \ddots& & & \\
		\vdots & \ddots & \Ddots[shorten-end=-5pt]& & & \\
		& & & & & 0\\[6pt]
		0 & \cdots & & 0 & a_{3m-3} & 1
	\end{bNiceMatrix},&
	U_{m}&\coloneq\begin{bNiceMatrix}[small]
		a_1 & 1 & 0 & \cdots & & 0\\
		0 & a_4 & 1 & \Ddots[shorten-end=0pt] & & \vdots\\
		\vdots & \ddots& \Ddots[shorten-end=-7pt]& \ddots & & \\
		& & & & & 0\\[6pt]
		& & & & & 1\\[6pt]
		0 & \cdots& & & 0 & a_{3m-2}
	\end{bNiceMatrix}.
\end{align}

In \cite{genetico}, a bidiagonal factorization was discovered for the recurrence matrix of the Jacobi--Piñeiro multiple orthogonal polynomials. Similarly, in \cite{Lima-Loureiro}, a similar bidiagonal factorization was found for the recurrence matrix of the hypergeometric multiple orthogonal polynomials. The existence of a positive bidiagonal factorization (PBF) was guaranteed when the parameters defining the weights fell within a specific region known as the semi-band.

Additionally, in \cite{stbbm0} and \cite{stbbm}, a Favard-type theorem was proven, ensuring that for a given banded Hessenberg matrix or a banded matrix where a positive bidiagonal factorization holds ($a_n>0$, $n\in\mathbb{N}$), the existence of a set of multiple orthogonal polynomials or mixed multiple orthogonal polynomials, respectively, can be ensured.

Moreover, in \cite{Hypergeometric}, \cite{JP}, and \cite{finite-Markov}, it was discovered that these PBFs are relevant for finding associated Markov chains whose stochastic matrix can be factorized in terms of stochastic matrices describing pure birth or pure death Markov chains. The bidiagonal factorization of tetradiagonal matrices is associated with corresponding Darboux transformations and Christoffel perturbations, as described in \cite{Darboux}. Additionally, the PBF of tetradiagonal matrices leads to a very rich theory involving the theory of continued fractions \cite{PBF_tetra}.

In \cite{HahnI}, we provided hypergeometric expressions for the Hahn multiple orthogonal polynomials of type I, which allowed for the derivation of explicit hypergeometric expressions for all type I multiple orthogonal polynomials that are descendants in the multiple Askey scheme of the Hahn multiple orthogonal polynomials. These descendants include Jacobi-Piñeiro, multiple Laguerre of the first and second kind, multiple Meixner of the first and second kind, multiple Charlier, and multiple Kravchuk.

In this paper, we utilize some of the ideas learned in \cite{HahnI} to find a bidiagonal factorization of the recurrence matrix for Hahn multiple orthogonal polynomials. Additionally, when this matrix is nonnegative, we identify a region of parameters where a PBF exists. Furthermore, using the multiple Askey scheme, we present the bidiagonal factorization for all the aforementioned descendants of the Hahn multiple orthogonal polynomials. For cases where the recurrence tetradiagonal Hessenberg matrix is nonnegative, such as Jacobi-Piñeiro, Laguerre of the first kind, and Meixner of the second kind, we determine the corresponding PBF.

There is a Mathematica notebook written to compute bidiagonal factorizations of the recurrence matrix for Hahn multiple orthogonal polynomials and its descendants in the multiple Asket scheme. It has been  uploaded to the  \hyperref{https://notebookarchive.org}{}{}{Mathematica Notebook Archive} and to  \hyperref{https://github.com/ManuelManas/Bidiagonal_factorization/blob/main/}{}{}{GitHub}.

\section{Bidiagonal Factorization for the multiple Hahn case}

This bidiagonal factorization \eqref{factorizacionbidiagonalmultiple} is not unique and depends on the choice of the initial condition $a_2$. The best choice we have found   \cite[Theorem 2]{stbbm0} is $a_2=-\frac{A_{(1,0),1}(x)}{A_{(1,1),1}(x)}$, as it leads to the closed formulas.

The coefficients of the recurrence relations \eqref{eq:laT} and \eqref{eq:laT truncada} can be found in \cite[\S 4.5]{Arvesu} the recursion coeffcients $b_n(\alpha_1,\alpha_2,\beta,N)$, $c_n(\alpha_1,\alpha_2,\beta,N)$ and $d_n(\alpha_1,\alpha_2,\beta,N)$ are 
{\small\begin{align}\label{recurrenciaHahnmultiple}
 \left\{\begin{aligned}
 	 b_{2m}&=\begin{multlined}[t][\textwidth]
 A(m,m,\alpha_1,\alpha_2,\beta,N)+A(m,m,\alpha_2,\alpha_1+1,\beta,N)+C(m,m+1,\alpha_1,\alpha_2,\beta,N)+D(m,m,\alpha_1,\alpha_2,\beta,N),
 \end{multlined}\\
 b_{2m+1}&=\begin{multlined}[t][\textwidth]
 	A(m,m+1,\alpha_2,\alpha_1,\beta,N)+A(m+1,m,\alpha_1,\alpha_2+1,\beta,N)
 +C(m+1,m+2,\alpha_2,\alpha_1,\beta,N)+D(m,m+1,\alpha_2,\alpha_1,\beta,N),
 \end{multlined}\\
 c_{2m}&=\begin{multlined}[t][.95\textwidth]
 	(A(m,m,\alpha_1,\alpha_2,\beta,N)+A(m,m,\alpha_2,\alpha_1+1,\beta,N)\\+D(m,m,\alpha_1,\alpha_2,\beta,N)) {C}(m,m+1,\alpha_2,\alpha_1,\beta,N)
 +A(m,m,\alpha_1,\alpha_2,\beta,N)B(m,m,\alpha_1,\alpha_2,\beta,N),
 \end{multlined}\\
 c_{2m+1}&=\begin{multlined}[t][.95\textwidth]
 	(A(m,m+1,\alpha_2,\alpha_1,\beta,N)+A(m+1,m,\alpha_1,\alpha_2+1,\beta,N)
 \\
 +D(m,m+1,\alpha_2,\alpha_1,\beta,N)){C}(m+1,m+1,\alpha_1,\alpha_2,\beta,N)
+A(m,m+1,\alpha_2,\alpha_1,\beta,N)B(m,m+1,\alpha_2,\alpha_1,\beta,N),
 \end{multlined}\\
 d_{2m}&=A(m,m,\alpha_1,\alpha_2,\beta,N)B(m,m,\alpha_1,\alpha_2,\beta,N)C(m,m,\alpha_1,\alpha_2,\beta,N),\\
 d_{2m+1}&=A(m,m+1,\alpha_2,\alpha_1,\beta,N)B(m,m+1,\alpha_2,\alpha_1,\beta,N)C(m,m+1,\alpha_2,\alpha_1,\beta,N).
 \end{aligned}\right.
\end{align}
with the coefficients $	A(n_1,n_2,\alpha_1,\alpha_2,\beta,N)$, 	$B(n_1,n_2,\alpha_1,\alpha_2,\beta,N)$,	$C(n_1,n_2,\alpha_1,\alpha_2,\beta,N)$ and 	$D(n_1,n_2,\alpha_1,\alpha_2,\beta,N)$ given by
{\begin{align}
\label{coefsauxiliares} 
\left\{\begin{aligned}
	A&=\dfrac{n_1(n_1+n_2+\alpha_2+\beta)(n_1+n_2+\beta)(N+n_1+\alpha_1+\beta+1)}{(n_1+2n_2+\alpha_2+\beta)(2n_1+n_2+\alpha_1+\beta)(2n_1+n_2+\alpha_1+\beta+1)},\\
 B&=\dfrac{(n_1+\alpha_1-\alpha_2)(n_1+n_2+\alpha_1+\beta)(n_1+n_2+\beta-1)(N-n_1-n_2+1)}{(n_1+2n_2+\alpha_2+\beta-1)(2n_1+n_2+\alpha_1+\beta)(2n_1+n_2+\alpha_1+\beta-1)},\\
 C&=\dfrac{(n_1+\alpha_1)(n_1+n_2+\alpha_1+\beta-1)(n_1+n_2+\alpha_2+\beta-1)(N-n_1-n_2+2)}{(n_1+2n_2+\alpha_2+\beta-2)(2n_1+n_2+\alpha_1+\beta-2)(2n_1+n_2+\alpha_1+\beta-1)},\\
 D&=\dfrac{n_1n_2(n_1+n_2+\beta)}{(2n_1+n_2+\alpha_1+\beta+1)(n_1+2n_2+\alpha_2+\beta)}.
\end{aligned}\right.
\end{align}}

\clearpage
\begin{teo}
\label{Pro:PBF_Hahn}
   The  recurrence matrix, with coefficients as in \eqref{recurrenciaHahnmultiple}, satisfies a bidiagonal factorization \eqref{factorizacionbidiagonalmultiple}  respect to the following coefficients $a_n(\alpha_1,\alpha_2,\beta,N)$

\begin{align}
\label{aHahnmultiple}
\left\{\begin{aligned}
	    a_{6n+1}&=
    \dfrac{(N-2n)(\alpha_1+1+n)(\alpha_1+\beta+2n+1)(\alpha_2+\beta+2n+1)}{(\alpha_1+\beta+3n+1)_2(\alpha_2+\beta+3n+1)}\\
   a_{6n+4}&=\dfrac{(N-2n-1)(\alpha_2+1+n)(\alpha_1+\beta+2n+2)(\alpha_2+\beta+2n+2)}{(\alpha_1+\beta+3n+3)(\alpha_2+\beta+3n+2)_2},\\
a_{6n+2}&=\begin{multlined}[t][.7\textwidth]\dfrac{(N-2n)(n)_n(\beta+2n+1)(\alpha_2-\alpha_1+n)(\alpha_2+\beta+n+1)}{(n+1)_n(\alpha_1+\beta+3n+2)(\alpha_2+\beta+3n+1)_2}
    \dfrac{\pFq{3}{2}{-n,-N,\alpha_2-\alpha_1-n}{-2n+1,\alpha_2+\beta+n+1}{1}}{\pFq{3}{2}{-n,-N,\alpha_2-\alpha_1-n}{-2n,\alpha_2+\beta+n+2}{1}},\end{multlined}\\
     a_{6n+5}&=\begin{multlined}[t][.8\textwidth]\dfrac{(n+1)(N-2n-1)(\beta+2n+2)(\alpha_1-\alpha_2+n+1)(\alpha_1+\beta+2+n+N)}{(2n+1)(\alpha_1+\beta+3n+3)_2(\alpha_2+\beta+3n+3)}
     	\dfrac{\pFq{3}{2}{-n,-N,\alpha_2-\alpha_1-n}{-2n,\alpha_2+\beta+n+2}{1}}{\pFq{3}{2}{-n-1,-N,\alpha_2-\alpha_1-n-1}{-2n-1,\alpha_2+\beta+n+2}{1}},\end{multlined}\\
    a_{6n+3}&=\begin{multlined}[t][.8\textwidth]\dfrac{(2n+1)(\beta+2n+1)(\alpha_1+\beta+2n+2)(\alpha_2+\beta+2n+2)}{(\alpha_1+\beta+3n+2)_2(\alpha_2+\beta+3n+2)}
    \dfrac{\pFq{3}{2}{-n-1,-N,\alpha_2-\alpha_1-n-1}{-2n-1,\alpha_2+\beta+n+2}{1}}{\pFq{3}{2}{-n,-N,\alpha_2-\alpha_1-n}{-2n,\alpha_2+\beta+n+2}{1}},\end{multlined}\\
     a_{6n+6}&=\begin{multlined}[t][.8\textwidth]\dfrac{2(n+1)(\beta+2n+2)(\alpha_1+\beta+2n+3)(\alpha_2+\beta+2n+3)(\alpha_2+\beta+2+n+N)}{(\alpha_1+\beta+3n+4)(\alpha_2+\beta+3n+3)_2(\alpha_2+\beta+n+2)}
     	\dfrac{\pFq{3}{2}{-n-1,-N,\alpha_2-\alpha_1-n-1}{-2n-2,\alpha_2+\beta+n+3}{1}}{\pFq{3}{2}{-n-1,-N,\alpha_2-\alpha_1-n-1}{-2n-1,\alpha_2+\beta+n+2}{1}}.\end{multlined}
\end{aligned}\right.
\end{align}
This is a positive
bidiagonal factorization in the ``semi-band'' whenever $-1<\alpha_1-\alpha_2<0$.
\end{teo}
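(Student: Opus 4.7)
The plan is to verify the factorization directly. Multiplying out the three bidiagonal factors in \eqref{factorizacionbidiagonalmultiple} entry by entry reduces the claim $T_m=L_{1,m}L_{2,m}U_m$ to three scalar identities relating the coefficients of \eqref{aHahnmultiple} to the recurrence coefficients of \eqref{recurrenciaHahnmultiple}: for $n\ge 1$,
\begin{align*}
d_n&=a_{3n-5}\,a_{3n-3}\,a_{3n-1},\\
c_n&=a_{3n-2}(a_{3n-1}+a_{3n})+a_{3n-1}\,a_{3n-3},\\
b_n&=a_{3n-1}+a_{3n}+a_{3n+1},
\end{align*}
together with the boundary case $b_0=a_1$. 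By \cite[Theorem~2]{stbbm0} the sequence $\{a_n\}$ is uniquely determined once $a_2$ is prescribed, so checking these three families of identities simultaneously establishes both the existence of the factorization and the explicit closed forms.

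I would start with the $d_n$ identity because its right-hand side is purely multiplicative. Splitting into the parities $n=2m$ and $n=2m+1$, the three relevant entries fall into three of the six residue classes modulo~$6$ in \eqref{aHahnmultiple}; inspecting the ${}_3F_2$ arguments one finds that the hypergeometric ratios carried by two of the three entries are mutual inverses (the numerator of one is the denominator of the other), so they telescope to $1$. What remains is a rational identity comparing the product of Pochhammer prefactors with the product $ABC$ from \eqref{coefsauxiliares}, a routine algebraic check. The sum identities for $b_n$ and $c_n$ form the core of the proof: after clearing denominators, each becomes a linear relation among three or four consecutive ${}_3F_2$'s with parameters shifted by integers, which is exactly the shape of a three-term contiguous relation. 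Applying the specific ${}_3F_2$ contiguous relations developed in \cite{HahnI} then collapses each sum to the required rational expression. A conceptually cleaner alternative is to invoke \cite[Theorem~2]{stbbm0}, which writes each $a_n$ as a ratio of evaluations of type~I Hahn multiple orthogonal polynomials, to substitute the hypergeometric formulas of \cite{HahnI}, and to use the same contiguous relations to reduce to \eqref{aHahnmultiple}. The main obstacle lies precisely here: selecting, among the many three-term relations available for ${}_3F_2$, the specific one that produces the collapse, and performing the attendant Pochhammer bookkeeping cleanly enough to match the displayed closed forms.

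For the positivity statement I would examine each of the six families in \eqref{aHahnmultiple} separately. Under the standard Hahn hypotheses on $\alpha_1,\alpha_2,\beta,N$, every Pochhammer and linear prefactor outside the ${}_3F_2$ ratios is manifestly positive except for the two sign-sensitive factors $(\alpha_2-\alpha_1+n)$ in $a_{6n+2}$ and $(\alpha_1-\alpha_2+n+1)$ in $a_{6n+5}$, and both are positive for every $n\ge 0$ if and only if $-1<\alpha_1-\alpha_2<0$, which pins down the semi-band. The positivity of the ${}_3F_2$ ratios themselves in this range then follows, as in the analogous Jacobi--Pi\~neiro analysis of \cite{genetico}, by re-writing each terminating series ${}_3F_2(-n,-N,\alpha_2-\alpha_1-n;\,\cdot\,,\,\cdot\,;1)$ in a form where the negative-integer top parameters together with the semi-band constraint force all surviving summands to share a common sign.
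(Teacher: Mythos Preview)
Your plan matches the paper's proof almost exactly: the same reduction of $T_m=L_{1,m}L_{2,m}U_m$ to the scalar identities for $b_n,c_n,d_n$, the same telescoping of the ${}_3F_2$ ratios in the $d_n$ products, and the same recognition that the $b_n$ identities are the heart of the matter. Two points are worth noting.

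First, you treat the $c_n$ identities as requiring their own contiguous-relation work. The paper avoids this entirely: once the $b_n$ identities are in hand, one substitutes $a_{6n+5}+a_{6n+6}=b_{2n+2}-a_{6n+7}$ (and the analogous relation for the other parity) into the $c_n$ expression; since $a_{6n+3}a_{6n+5}$ and $a_{6n+4}a_{6n+7}$ are both free of ${}_3F_2$'s by the same reciprocity you already used for $d_n$, the $c_n$ identities collapse to rational checks with no hypergeometric content. So only the two $b_n$ identities need genuine three-term relation arguments.

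Second, you correctly flag the selection of the right three-term relation as the main obstacle, but pointing to \cite{HahnI} does not resolve it. The paper instead invokes the Ebisu--Iwasaki theorem \cite{contiguas}, which guarantees that for any two integer shift vectors $\vec p,\vec q$ the coefficients $u,v$ in the corresponding three-term relation for ${}_3F_2(1)$ are unique and algorithmically computable via an explicit product of $2\times2$ matrices. The proof then identifies the specific $\vec a,\vec p,\vec q$ arising from each $b_n$ identity, computes $u,v$ by the recipe, and checks they agree with the coefficients forced by \eqref{aHahnmultiple}. This is precisely the missing ingredient in your outline.
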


To accomplish this, we need to handle the $_3F_2$ hypergeometric functions. For that purpose, we introduce the following theorem, originally established by Ebisu and Iwasaki in \cite[Theorem 1.1]{contiguas}.

\begin{teo}[Ebisu--Iwasaki]
	\label{teoremaEbisuIwasaki}
	Let $\vec{a}=(a_0,a_1,a_2,a_3,a_4)\in\mathbb{R}^5$ and $\vec{p}=(p_0,p_1,p_2,p_3,p_4),\vec{q}=(q_0,q_1,q_2,q_3,q_4)\in\mathbb{Z}^5$ be distinct shift vectors. Then, there exist unique rational functions $u,v$ such that
	\begin{align*}
		\pFq{3}{2}{a_0,a_1,a_2}{a_3,a_4}{1}
		=&u(\vec{a})\frac{\Gamma(a_3)\Gamma(a_4)}{\Gamma(a_0)\Gamma(a_1)\Gamma(a_2)}\frac{\Gamma(a_0+p_0)\Gamma(a_1+p_1)\Gamma(a_2+p_2)}{\Gamma(a_3+p_3)\Gamma(a_4+p_4)}\pFq{3}{2}{a_0+p_0,a_1+p_1,a_2+p_2}{a_3+p_3,a_4+p_4}{1}\\
		&+v(\vec{a})\frac{\Gamma(a_3)\Gamma(a_4)}{\Gamma(a_0)\Gamma(a_1)\Gamma(a_2)}\frac{\Gamma(a_0+q_0)\Gamma(a_1+q_1)\Gamma(a_2+q_2)}{\Gamma(a_3+q_3)\Gamma(a_4+q_4)}\pFq{3}{2}{a_0+q_0,a_1+q_1,a_2+q_2}{a_3+q_3,a_4+q_4}{1}
	\end{align*}
	and there is a systematic recipe to determine $u,v$ in a finite number of steps.
\end{teo}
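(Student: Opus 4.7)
The plan is to prove the Ebisu--Iwasaki theorem by exploiting the module-theoretic structure of parameter shifts acting on the ${}_3F_2$ series. The central observation is that the $\mathbb{Q}(\vec{a})$-vector space $M$ spanned by the family $\bigl\{{}_3F_2(\vec{a}+\vec{k};1)\bigr\}_{\vec{k}\in\mathbb{Z}^5}$, with the Gamma prefactors appearing in the statement absorbed into the basis, has rank exactly $2$ over the field of rational functions in $\vec{a}$. Once this is established, the three elements associated with shifts $\vec{0},\vec{p},\vec{q}$ must satisfy a unique $\mathbb{Q}(\vec{a})$-linear relation, and solving for the coefficient of ${}_3F_2(\vec{a};1)$ produces the desired identity with rational functions $u(\vec{a}),v(\vec{a})$.

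First I would derive the ten elementary contiguous relations, one per shift $\pm\vec{e}_j$, $j=0,\dots,4$, by direct manipulation of the defining power series (splitting off leading terms and comparing termwise coefficients). Each such relation has the three-term form asserted in the theorem for a suitable pair of neighbouring shifts, with explicit rational coefficients. The Gamma-ratio normalisation in the statement corresponds to the natural way these prefactors appear when one rewrites $(a_j)_n$ as $\Gamma(a_j+n)/\Gamma(a_j)$, so tracking these factors is bookkeeping rather than substantive work. Next I would use these elementary relations to show that $M$ is generated, as a $\mathbb{Q}(\vec{a})$-module, by any two generically independent neighbours of ${}_3F_2(\vec{a};1)$; this is done by an induction on the $\ell^1$-size of the shift vector, reducing an arbitrary shift to a composition of unit shifts and iteratively substituting each three-term elementary relation.

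The existence part of the theorem then follows: given distinct $\vec{p},\vec{q}\in\mathbb{Z}^5$, write ${}_3F_2(\vec{a}+\vec{p};1)$ and ${}_3F_2(\vec{a}+\vec{q};1)$ in terms of the chosen rank-$2$ basis by iterating elementary relations, then solve a $2\times 2$ linear system over $\mathbb{Q}(\vec{a})$ to express ${}_3F_2(\vec{a};1)$ in terms of these two shifted series. This simultaneously yields the explicit rational functions $u,v$ and provides the promised ``systematic recipe'' that terminates in at most $\|\vec{p}\|_1+\|\vec{q}\|_1$ substitutions plus one inversion. Uniqueness follows from the fact that the two shifted series are generically $\mathbb{Q}(\vec{a})$-linearly independent, as one can verify by evaluating the formal series expansions at a parameter point where no Gamma factor degenerates.

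The main obstacle is the rank computation in Step 2: the hypergeometric system for ${}_3F_2$ has order three as an ODE in the argument, so a priori the shift module could have rank $3$, and one must genuinely exploit the specialisation $z=1$ (together with the way the series truncates against its neighbours) to cut the rank down to $2$. A clean way to handle this is to rephrase the problem in the Ore algebra of difference operators in the five parameters, compute a Gr\"obner basis of the annihilating left ideal, and read off the holonomic rank; verifying that this rank equals $2$ is the technical heart of \cite{contiguas} and is where the ``finite number of steps'' in the theorem statement is quantified.
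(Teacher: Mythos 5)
A point of context before the comparison: the paper does not prove Theorem \ref{teoremaEbisuIwasaki} at all. It is imported verbatim from Ebisu and Iwasaki \cite{contiguas} (their Theorem 1.1), and the surrounding text only reproduces the computational recipe --- the matrices $A^{\pm}_i$ of \eqref{A012+}--\eqref{A34-} and the formulas \eqref{coefsrelacioncontiguidad} --- needed to evaluate $u$ and $v$ in the two concrete instances used later. So there is no in-paper proof to measure you against; what follows compares your outline with the actual argument of \cite{contiguas}.

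Your strategy --- show that the $\mathbb{Q}(\vec{a})$-span of the Gamma-normalised contiguous family has rank $2$, derive unit-shift relations, compose them, and solve a $2\times 2$ system --- is indeed the shape of the Ebisu--Iwasaki proof, but as written it has a genuine gap exactly where you locate the ``technical heart''. The rank-$2$ claim is the entire content of the theorem, and your proposal does not establish it: it defers it to a Gr\"obner-basis computation carried out ``in \cite{contiguas}'', which is circular if the goal is to prove the theorem. Relatedly, the step you dismiss as bookkeeping is not: at generic argument $z$ the contiguous relations for ${}_3F_2$ obtained by splitting off leading terms and comparing coefficients are \emph{four}-term (the holonomic rank is $3$), and the collapse to three-term relations is special to $z=1$. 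Obtaining the $2\times 2$ transition matrices requires exhibiting a concrete rank-$2$ structure at unit argument --- Ebisu--Iwasaki do this by pairing $f(\vec{a})$ with a specific companion shift and verifying the ten unit-shift matrices directly, using classical three-term relations at $z=1$ --- and none of this follows from termwise series manipulation alone. Finally, uniqueness needs $f(\vec{a}+\vec{p})$ and $f(\vec{a}+\vec{q})$ to be linearly independent over $\mathbb{Q}(\vec{a})$; this can genuinely fail for special shift pairs (which is why the denominator $\det A(\vec{a},\vec{p})A(\vec{a}+\vec{p},\vec{q}-\vec{p})_{1,2}$ in \eqref{coefsrelacioncontiguidad} must be shown nonzero), and evaluating the series at a single parameter point does not settle independence over the rational function field. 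As it stands your text is a correct road map of \cite{contiguas}, not a self-contained proof.
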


The steps to find $u,v$ are given in \cite[Recipe 5.4]{contiguas}. They can be obtained through the expressions
\begin{align}
	\label{coefsrelacioncontiguidad}   
	u(\vec{a})=\frac{A(\vec{a},\vec{q})_{1,2}}{\det{A(\vec{a},\vec{p})A(\vec{a}+\vec{p},\vec{q}-\vec{p})_{1,2}}}, \quad v(\vec{a})=-\frac{A(\vec{a},\vec{p})_{1,2}}{\det{A(\vec{a},\vec{p})A(\vec{a}+\vec{p},\vec{q}-\vec{p})_{1,2}}}
\end{align}
where $A(\vec{a},\vec{p})$ (similarly for $A(\vec{a},\vec{q})$ and $A(\vec{a}+\vec{p},\vec{q}-\vec{p})$) is a product of contiguous $2\times 2$ matrices given by
\begin{align}\label{contiguousproduct}   
	A(\vec{a},\vec{p}):=A^{\epsilon_m}_{i_m}(\vec{a}+\vec{p}_{m-1})A^{\epsilon_{m-1}}_{i_{m-1}}(\vec{a}+\vec{p}_{m-2})\cdots A^{\epsilon_2}_{i_2}(\vec{a}+\vec{p}_1) A^{\epsilon_1}_{i_1}(\vec{a}+\vec{p}_0),
\end{align}
where $\epsilon_{1},\dots,\epsilon_{m}\in\{+,-\}$, $i_1,\dots i_m\in\{0,1,2,3,4\}$ and
\begin{align*}
	\vec{p}_0:=\vec{0}, \quad
	\vec{p}_{k}:=\vec{p}_{k-1}+\epsilon_{k}\vec{e}_{i_k}, \quad k\in\{1,\cdots,m\},
\end{align*}
with 
\begin{gather*}
\begin{aligned}
	\vec{e}_0&\coloneq\begin{bNiceMatrix}1 & 0 &0 &0 &0 \end{bNiceMatrix}, &
\vec{e}_1&\coloneq\begin{bNiceMatrix}0 & 1&0 &0 &0\end{bNiceMatrix}, 
\end{aligned}\\
\begin{aligned}
	\vec{e}_2&\coloneq\begin{bNiceMatrix}0& 0&1 &0&0 \end{bNiceMatrix}, &
\vec{e}_3&\coloneq \begin{bNiceMatrix}0&0 &0 &1 &0\end{bNiceMatrix},&
\vec{e}_4&\coloneq \begin{bNiceMatrix}0 &0 &0 &0 &1\end{bNiceMatrix}.
\end{aligned}
\end{gather*}

This means that the matrix $A^{\epsilon_k}_{i_k}(\vec{a}+\vec{p}_{k-1})$ corresponds to the operation of adding ($\epsilon_{k}=+$) or subtracting ($\epsilon_k=-$) one unit to the $i_k$-th component of the vector $\vec{a}+\vec{p}_{k-1}$. In other words, we have a matrix for each time we need to add or subtract one unit to transition from $\vec{a}$ to $\vec{a}+\vec{p}$. All these matrices $A^{\epsilon}_i$ are given in \cite[Table 2]{contiguas}
\begin{align}
	\label{A012+}
	A_i^+(\vec{a})&=\begin{bmatrix}
		a_i&1\\[4pt] \dfrac{\phi_3(\vec{a})}{s(\vec{a})-2}& \dfrac{a_ja_k-(a_i-a_3+1)(a_i-a_4+1)}{s(\vec{a})-2}
	\end{bmatrix}, &i&\in\{0,1,2\},\\
	\label{A34+}
	A_i^+(\vec{a})&=\dfrac{1}{(a_i-a_0)(a_i-a_1)(a_i-a_2)}\begin{bmatrix}
		a^2_i-\phi_1(\vec{a})a_i+\phi_2(\vec{a})&-s(\vec{a})+1\\[4pt] -\phi_3(\vec{a}) & a_i(s(\vec{a})-1)
	\end{bmatrix},  &i&\in\{3,4\},\\
	\label{A012-}
	A_i^-(\vec{a})&=\dfrac{1}{(a_i-a_3)(a_i-a_4)}\begin{bmatrix}
		\dfrac{(a_i-a_3)(a_i-a_4)-a_ja_k}{a_i-1}&\dfrac{s(\vec{a})-1}{a_i-1}\\[10pt]
		a_ja_k&-s(\vec{a})+1
	\end{bmatrix}, &i&=\{0,1,2\},\\
	\label{A34-}
	A_i^-(\vec{a})&=\begin{bmatrix}
		a_i-1 & 1\\[4pt]
		\dfrac{\phi_3(\vec{a})}{s(\vec{a})-2} & \dfrac{(a_i-1)^2-\phi_1(\vec{a})(a_i-1)+\phi_2(\vec{a})}{s(\vec{a})-2}
	\end{bmatrix}, &i&\in\{3,4\},
\end{align}
where
\begin{align*}
	\phi_1(\vec{a})&\coloneqq a_0+a_1+a_2,&
	\phi_2(\vec{a})&\coloneqq a_0a_1+a_1a_2+a_2a_0,&
	\phi_3(\vec{a})&\coloneqq a_0a_1a_2,&
	s(\vec{a})&\coloneqq a_3+a_4-a_0-a_1-a_2,
\end{align*}
and $	a_ja_k\coloneqq a_1a_2\delta_{i,0}+a_2a_0\delta_{i,1}+a_0a_1\delta_{i,2}$.
With these tools at our disposal, we have all the necessary means to calculate the functions $u$ and $v$ for a given $\vec{a}$, $\vec{p}$, and $\vec{q}$ using the recipe provided earlier. Now, let us proceed to prove Proposition \ref{Pro:PBF_Hahn}.
\begin{proof}
In Theorem \ref{Pro:PBF_Hahn}, proving the matrix equation $T_m=L_{1,m}L_{2,m}U_m$ is equivalent to proving the following system of equations:
	\begin{align}
		\label{a1}
		a_1=b_0\\
		\label{a2}
		a_{6n+2}+a_{6n+3}+a_{6n+4}=b_{2n+1}\\
		\label{a3}
		a_{6n+5}+a_{6n+6}+a_{6n+7}=b_{2n+2}\\
		\label{a4}
		a_1(a_2+a_3)=c_1\\
		\label{a5}
		a_{6n+3}a_{6n+5}+a_{6n+4}(a_{6n+5}+a_{6n+6})=c_{2n+2}\\
		\label{a6}
		a_{6n+6}a_{6n+8}+a_{6n+7}(a_{6n+8}+a_{6n+9})=c_{2n+3}\\
		\label{a7}
		a_{6n+1}a_{6n+3}a_{6n+5}=d_{2n+2}\\
		\label{a8}
		a_{6n+4}a_{6n+6}a_{6n+8}=d_{2n+3}
	\end{align}
The initial conditions \eqref{a1} and \eqref{a4} can be easily verified.

It is worth noting that in Theorem \ref{Pro:PBF_Hahn} the coefficients $a_{6n+1}$ and $a_{6n+4}$ are the only ones that do not involve the $_3F_2$ hypergeometric functions, while the other coefficients do. However, the ratios of $_3F_2$ functions in $a_{6n+3}$ and $a_{6n+6}$ are reciprocals of those in $a_{6n+5}$ and $a_{6n+8}$, respectively. Therefore, equations \eqref{a7} and \eqref{a8} do not rely on the $_3F_2$ functions and can be easily verified.
	
Moreover, due to the same reason, there is no dependence on $_3F_2$ functions in the first term on the left-hand side of equations \eqref{a5} and \eqref{a6}. Now, assuming that \eqref{a2} and \eqref{a3} hold, we can substitute
\begin{align*}
	a_{6n+5}+a_{6n+6}&=b_{2n+2}-a_{6n+7},&
	a_{6n+8}+a_{6n+9}&=b_{2n+3}-a_{6n+10},
\end{align*}
respectively, into equations \eqref{a5} and \eqref{a6}. This yields expressions free of $_3F_2$ functions, which can be easily verified.
	
So, to check Theorem \ref{Pro:PBF_Hahn}, we only need to prove equations \eqref{a2} and \eqref{a3}. Let's begin with \eqref{a2}. Observe that the $_3F_2$ function in the denominator of $a_{6n+2}$ and $a_{6n+3}$ is the same, so it can be factored out. By substituting the coefficients and simplifying, we can rewrite \eqref{a2} as the following contiguous expression:

{\small	\begin{multline}
	\label{a2+a3}
	\Bigg(n(n+1)+\dfrac{n(\alpha_1+\beta+2n+1)(\alpha_2+\beta+n+N+1)}{(\alpha_2+\beta+3n+1)}+
	\dfrac{(n+1)(\alpha_2+\beta+2n+2)(\alpha_1+\beta+n+N+2)}{(\alpha_1+\beta+3n+3)}\Bigg)	\pFq{3}{2}{-n,-N,\alpha_2-\alpha_1-n}{-2n,\alpha_2+\beta+n+2}{1}
		\\=	\dfrac{(N-2n)(n)_n(\alpha_2-\alpha_1+n)(\alpha_2+\beta+n+1)}{(n+1)_n(\alpha_2+\beta+3n+1)}
		\pFq{3}{2}{-n,-N,\alpha_2-\alpha_1-n}{-2n+1,\alpha_2+\beta+n+1}{1} \\
		+\dfrac{(2n+1)(\alpha_1+\beta+2n+2)(\alpha_2+\beta+2n+2)}{(\alpha_1+\beta+3n+3)}
		\pFq{3}{2}{-n-1,-N,\alpha_2-\alpha_1-n-1}{-2n-1,\alpha_2+\beta+n+2}{1}.
\end{multline}}

However, since these shifted $_3F_2$ functions are evaluated at $1$, we know that the coefficients relating them are unique according to Theorem \ref{teoremaEbisuIwasaki}. Therefore, we can calculate these coefficients using the algorithm described in the theorem and confirm that they are the same as those in the previous expression \eqref{a2+a3} derived from Theorem \ref{Pro:PBF_Hahn}. To do this, let's use the notation from Theorem \ref{teoremaEbisuIwasaki} and identify
	\begin{align*}
		\vec{a}&=-n \vec e_0 -N \vec e_1+(\alpha_2-\alpha_1-n) \vec e_2 -2n\vec e_3+ (\alpha_2+\beta+n+2)\vec e_4,&
		\vec{p}&=-\vec e_0 -\vec e_2 -\vec e_3 , &
		\vec{q}&=\vec e_3 -\vec e_4.
	\end{align*}
The matrices we need to compute $u,v$ according to \eqref{coefsrelacioncontiguidad} and \eqref{contiguousproduct} are:
\begin{align*}
	A(\vec{a},\vec{p}) &= A^{-}_3\big(\vec{a}	-\vec e_0-\vec e_2 \big)A^{-}_2\big(\vec{a}-\vec e_0\big)A^{-}_0\big(\vec{a}\big),\\
	A(\vec{a},\vec{q}) &= A^{-}_4\big(\vec{a}+\vec e_3\big)A^{+}_3\big(\vec{a}\big),\\
	A(\vec{a}+\vec{p},\vec{q}-\vec{p}) &= A_{4}^{-}\big(\vec{a}+\vec{p}+
(\vec e_0+\vec e_2+2\vec e_3)	\big)A_{3}^{+}\big(\vec{a}+\vec{p}
	+(\vec e_0+\vec e_2+\vec e_3)\big)A^{+}_3\big(\vec{a}+\vec{p}+(\vec e_0+\vec e_2)
\big) A^{+}_2\big(\vec{a}+\vec{p}+\vec e_0\big)A^{+}_0\big(\vec{a}+\vec{p}\big).
\end{align*}
By computing these matrices using the expressions \eqref{A012+}-\eqref{A34-} for $A^\epsilon_i$, we find that
{\small	\begin{align*}
		A(\vec{a},\vec{p})_{1,2}&=\dfrac{(\alpha_1+\beta+3n+3)(\alpha_1+\beta+n+N+1)}{(n+1)(\alpha_1-\alpha_2+n+1)(\alpha_1+\beta+2n+2)(\alpha_2+\beta+2n+2)},\\
		A(\vec{a},\vec{q})_{1,2}&=\dfrac{(\alpha_2+\beta+3n+1)(\alpha_1+\beta+n+N+1)}{n(N-2n)(\alpha_1-\alpha_2-n)},\\
		A(\vec{a}+\vec{p},\vec{q}-\vec{p})_{1,2}&=
		\begin{multlined}[t][.5\textwidth]
			-\dfrac{1}{n (N-2n) (N-2n-1) ({\alpha_1}-{\alpha_2}-n)}\big(
		{\alpha_1}^2 n ({\alpha_2}+\beta+n+N+1)
	+{\alpha_1} \big({\alpha_2}^2 (n+1)+{\alpha_2} (\beta(4n+2)\\+n(11n+13)+3)+\beta^2(3 n+1)
	+\beta(n(13n+2N+15)+3)+n(n(14 n+5N+27)+4N+15)+2\big)\\
		+{\alpha_2}^2 (n+1) (\beta+n+N+2)+{\alpha_2}\big(\beta^2 (3n+2)+\beta (n(13n+19)+7)+(n+1)(2n+1)(7n+6)\big)\\+{\alpha_2}(n+1)N(2\beta+5n+3)
	+(\beta+3n+2) \big(\beta^2 (2n+1)+\beta\big(7n^2+2n (N+5)+N+3\big)\\+(n+1) (n (7 n+4 N+9)+N+2)\big)\big),
		\end{multlined}\\
		\det A(\vec{a},\vec{p})&=\dfrac{(N-2n-1) (\alpha_1+\beta+n+N+1)}{(n+1) (\alpha_1-\alpha_2+n+1) (\alpha_1+\beta+2n+2) (\alpha_2+\beta+2n+2)}.
	\end{align*}}
Finding $u$ and $v$ with \eqref{coefsrelacioncontiguidad} and multiplying by the corresponding quotients of gamma functions as indicated in Theorem \ref{teoremaEbisuIwasaki}, we find that the coefficients connecting these $_3F_2$ functions are exactly those in \eqref{a2+a3}.

The reasoning is completely analogous for equation \eqref{a3}. As before, the $_3F_2$ function in the denominator of $a_{6n+5},a_{6n+6}$ can be taken as a common factor. By replacing the coefficients $\{b_n\},\{a_n\}$ in Theorem \ref{Pro:PBF_Hahn} and simplifying, we find that
	\begin{multline}\label{hola} 
		\pFq{3}{2}{-n-1,-N,\alpha_2-\alpha_1-n-1}{-2n-1,\alpha_2+\beta+n+2}{1}\\
		=\Biggl(\dfrac{(N-2n-1)(\alpha_1-\alpha_2+n+1)(\alpha_1+\beta+2+n+N)}{(2n+1)(\alpha_1+\beta+3n+3)}
		\pFq{3}{2}{-n,-N,\alpha_2-\alpha_1-n}{-2n,\alpha_2+\beta+n+2}{1}\\
		+\dfrac{2(\alpha_1+\beta+2n+3)(\alpha_2+\beta+2n+3)(\alpha_2+\beta+2+n+N)}{(\alpha_2+\beta+3n+4)(\alpha_2+\beta+n+2)}
		\pFq{3}{2}{-n-1,-N,\alpha_2-\alpha_1-n-1}{-2n-2,\alpha_2+\beta+n+3}{1}\Biggl)\\
		\times\Biggl((n+1)+\dfrac{(\alpha_2+\beta+2n+2)(\alpha_1+\beta+n+N+2)}{(\alpha_1+\beta+3n+3)}+\dfrac{(\alpha_1+\beta+2n+3)(\alpha_2+\beta+n+N+2)}{(\alpha_2+\beta+3n+4)}\Biggl)^{-1}.
	\end{multline}

In this case, we identify
\begin{align*}
	\vec{a}&=-(n+1)\vec e_0-N\vec e_1+(\alpha_2-\alpha_1-n-1)\vec e_2-(2n+1)\vec e_3+(\alpha_2+\beta+n+2)\vec e_4,&
	\vec{p}&=\vec e_0+\vec e_2+\vec e_3,&
	\vec{q}&=-\vec e_3+\vec e_4.
\end{align*}
We can observe that $\vec{p}$ and $\vec{q}$ are exactly the opposite of the ones in the previous case. Now we have
{	\small\begin{align*}
		A(\vec{a},\vec{p})_{1,2}&=-\dfrac{\alpha_1+\beta+3n+3}{N-2n-1},\\
		A(\vec{a},\vec{q})_{1,2}&=\dfrac{(\alpha_2+\beta+3n+4)(\alpha_1+\beta+n+N+2)}{(\alpha_1+\beta+2n+3)(\alpha_2+\beta+2n+3)(\alpha_2+\beta+n+N+2)},\\
		A(\vec{a}+\vec{p},\vec{q}-\vec{p})_{1,2}&=\begin{multlined}[t][.8\textwidth]
			-\dfrac{({\alpha_1}+\beta+n+N+1) }{({\alpha_1}-{\alpha_1}+n+1) ({\alpha_1}+\beta+2 n+2)_2 ({\alpha_2}+\beta+2 n+2)_2
			({\alpha_2}+\beta+n+N+2)}
		\bigg(
		{\alpha_1}^2 ({\alpha_2}+\beta+n+N+2)\\
		+{\alpha_1}
		\left({\alpha_2}^2+{\alpha_2} (4 \beta+11 n+13)+3 \beta^2+\beta (13 n+2 N+17)+n (14 n+5
		N+37)+6 (N+4)\right)\\+{\alpha_2}^2 (\beta+n+N+2)
		+{\alpha_2} \left(3 \beta^2+\beta (13 n+2
		N+17)+n (14 n+5 N+37)+6 (N+4)\right)\\+13 \beta^2 n+2 \beta^3+17 \beta^2
		+N \left(2
		\left(5\beta n+\beta (\beta+6)+6 n^2\right)+29 n+17\right)\\+28 \beta n^2+74 \beta n+48 \beta+21
		n^3+83 n^2+108 n+46
		\bigg),\end{multlined}\\
		\det A(\vec{a},\vec{p})&=\dfrac{(n+1) (\alpha_1-\alpha_2+n+1) (\alpha_1+\beta+2n+2) (\alpha_2+\beta+2n+2)}{(N-2n-1) (\alpha_1+\beta+n+N+1)}.
	\end{align*}}
By following the same steps as in the previous case, we can conclude that the coefficients in \eqref{hola} are the same as those given by Theorem \ref{teoremaEbisuIwasaki}.
\end{proof}

Given these explicit expressions \eqref{aHahnmultiple} for $\{a_n\}$, one can check, factor by factor, that they are positive whenever $-1<\alpha_1-\alpha_2<0$. This follows from the fact that in the ratios of hypergeometric functions involved, both the numerator and denominator are positive.

These recursion coefficients in \eqref{recurrenciaHahnmultiple} are all positive if $-1<\alpha_1-\alpha_2<1$, region that determines a band in the parameter space. However, the PBF factorization only holds in the subregion $-1<\alpha_1-\alpha_2<0$ that we named the semi-band.  See the figure below:
\clearpage

\begin{center}
	\begin{tikzpicture}[arrowmark/.style 2 args={decoration={markings,mark=at position #1 with \arrow{#2}}},scale=1]
		\begin{axis}[axis lines=middle,axis equal,grid=both,xmin=-1, xmax=3,ymin=-1.5, ymax=3.5,
			xticklabel,yticklabel,disabledatascaling,xlabel=$\alpha_1$,ylabel=$\alpha_2$,every axis x label/.style={
				at={(ticklabel* cs:1)},
				anchor=south west,
			},
			every axis y label/.style={
				at={(ticklabel* cs:1.0)},
				anchor=south west,
			},grid style={line width=.1pt, draw=Bittersweet!10},
			major grid style={line width=.2pt,draw=Bittersweet!50},
			minor tick num=4,
			enlargelimits={abs=0.09},
			axis line style={latex'-latex'},Bittersweet]
			
			\draw [fill=DarkSlateBlue!20,opacity=.2,dashed,thick] (-1,4)--(-1,-1)--(5,-1)--(5,4)--(-1,4) ;
			\draw [fill=DarkSlateBlue!30,opacity=.5,dashed,thick] (-1,-1)--(-1,0)--(3,4) node[above, Black,sloped, pos=0.5] {$\alpha_1=\alpha_2+1$}--(5,5)--(5,4) --(0,-1) node[below, Black,sloped, pos=0.6] {$\alpha_1=\alpha_2-1$}--(-1,-1);
			\draw [
			pattern=north west lines, pattern color=DarkSlateBlue!50,opacity=.6,dashed] (-1,-1)--(0,-1)--(5,4)--(5,5)--(-1,-1) ;
			\draw [fill=DarkSlateBlue!30,opacity=.5,dashed,thick] (-1,-1) -- (4,4 )node[below, Black,sloped, pos=0.4] {$\alpha_1=\alpha_2$};
			\draw[thick,black] (axis cs:-1,0) circle[radius=2pt,opacity=0.2,fill]node[left,above ] {$-1$} ;
			\draw[thick,black] (axis cs:0,-1)circle[radius=2pt,opacity=0.2,fill] node[right,below ] {$-1$} ;
			\node[anchor = north east,Bittersweet] at (axis cs: 4.1,2.7) {$\mathscr R$} ;
			\node[anchor = north east,Bittersweet] at (axis cs: 0.1,.55) {$\mathscr S_+$} ;
			\node[anchor = north east,Bittersweet] at (axis cs: 0.7,0.05) {$\mathscr S_-$} ;
		\end{axis}
		\draw (3.5,-0.35) node
		{\begin{minipage}{0.5\textwidth}
				\begin{center}\small
					\textbf{Allowed parameter region $\mathscr R$, nonnegativeness  band $\mathscr S_+\cup\mathscr S_-$ and PBF semi-band $\mathscr S_-$}
				\end{center}
		\end{minipage}};
	\end{tikzpicture}
\end{center}

\section{Bidiagonal factorization for the Hahn descendants in the multiple Askey scheme}
Now, examining the expressions \eqref{aHahnmultiple} for the Hahn $a_n$ coefficients and considering the established limit relations between the recurrence relation coefficients of the families in the Askey scheme \cite{ContinuosII, Arvesu, AskeyII, HahnI, Clasicos}, we can deduce explicit expressions for a bidiagonal factorization of all the Hahn descendants in the Askey scheme, except for the Hermite family. The limit relations leading to the Hermite family are still unknown in the literature.

\subsection{Jacobi--Piñeiro}

The coefficients of the recurrence relations, as shown in \eqref{eq:laT} and \eqref{eq:laT truncada}, can be obtained from Hahn \eqref{recurrenciaHahnmultiple} 
through the limit
\begin{align*}
   & \lim_{N\rightarrow\infty}\dfrac{b_n(\alpha_1,\alpha_2,\beta,N)}{N}, &
&\lim_{N\rightarrow\infty}\dfrac{c_n(\alpha_1,\alpha_2,\beta,N)}{N^2}, &&
\lim_{N\rightarrow\infty}\dfrac{d_n(\alpha_1,\alpha_2,\beta,N)}{N^3}.
\end{align*}
These ones are $b_n(\alpha_1,\alpha_2,\beta),c_n(\alpha_1,\alpha_2,\beta)$ and $d_n(\alpha_1,\alpha_2,\beta)$, cf. \cite[\S 3.1]{Clasicos}, \cite[\S 3.3]{ContinuosII} with:
\begin{align}
\label{recurrenciaJPmultiple}
\left\{\begin{aligned}
	 b_{2m}&=\dfrac{A(m,m,\alpha_1,\alpha_2,\beta,N)}{N+\alpha_1+\beta+m+1}+\dfrac{A(m,m,\alpha_2,\alpha_1+1,\beta,N)}{N+\alpha_2+\beta+m+1}+\dfrac{C(m+1,m+1,\alpha_1,\alpha_2,\beta,N)}{N-2m},\\
 b_{2m+1}&=\begin{multlined}[t][.8\textwidth]
 	\dfrac{A(m,m+1,\alpha_2,\alpha_1,\beta,N)}{N+\alpha_2+\beta+m+1}+\dfrac{A(m+1,m,\alpha_1,\alpha_2+1,\beta,N)}{N+\alpha_1+\beta+m+2}
 +\dfrac{C(m+1,m+2,\alpha_2,\alpha_1,\beta,N)}{N-2m-1},
 \end{multlined}\\
 c_{2m}&=\begin{multlined}[t][.8\textwidth]
 	\left(\dfrac{A(m,m,\alpha_1,\alpha_2,\beta,N)}{N+\alpha_1+\beta+m+1}+\dfrac{A(m,m,\alpha_2,\alpha_1+1,\beta,N)}{N+\alpha_2+\beta+m+1}\right)
\dfrac{C(m,m+1,\alpha_2,\alpha_1,\beta,N)}{N-2m+1}
\\
+\dfrac{A(m,m,\alpha_1,\alpha_2,\beta,N)B(m,m,\alpha_1,\alpha_2,\beta,N)}{(N+\alpha_1+\beta+m+1)(N-2m+1)}, \end{multlined}\\
c_{2m+1}&=\begin{multlined}[t][.8\textwidth]
	\left(\dfrac{A(m,m+1,\alpha_2,\alpha_1,\beta,N)}{N+\alpha_2+\beta+m+1}+\dfrac{A(m+1,m,\alpha_1,\alpha_2+1,\beta,N)}{N+\alpha_1+\beta+m+2}\right)\dfrac{C(m+1,m+1,\alpha_1,\alpha_2,\beta,N)}{N-2m}\\
+\dfrac{A(m,m+1,\alpha_2,\alpha_1,\beta,N)B(m,m+1,\alpha_2,\alpha_1,\beta,N)}{(N+\alpha_2+\beta+m+1)(N-2m)},
\end{multlined}\\
 d_{2m}&=\dfrac{A(m,m,\alpha_1,\alpha_2,\beta,N)B(m,m,\alpha_1,\alpha_2,\beta,N)C(m,m,\alpha_1,\alpha_2,\beta,N)}{(N+\alpha_1+\beta+m+1)(N-2m+1)(N-2m+2)},\\
 d_{2m+1}&=\dfrac{A(m,m+1,\alpha_2,\alpha_1,\beta,N)B(m,m+1,\alpha_2,\alpha_1,\beta,N)C(m,m+1,\alpha_2,\alpha_1,\beta,N)}{(N+\alpha_2+\beta+m+1)(N-2m)(N-2m+1)}.
\end{aligned}\right.
\end{align}
Being $A,B,C,D$ the functions defined in \eqref{coefsauxiliares}. These coefficients are all positive if $-1<\alpha_1-\alpha_2<1$.

\begin{pro}[Jacobi--Piñeiro bidiagonal factorization]
The entries $a_n (\alpha_1,\alpha_2,\beta)$ of a bidiagonal factorization \eqref{factorizacionbidiagonalmultiple} of the Jacobi-Piñeiro recursion matrix with coefficients given in \eqref{recurrenciaJPmultiple} are as follows:
\begin{align*}
	a^{}_{6n+1}&=\dfrac{(\alpha_1+1+n)(\alpha_1+\beta+2n+1)(\alpha_2+\beta+2n+1)}{(\alpha_1+\beta+3n+1)_2(\alpha_2+\beta+3n+1)},&
	a^{}_{6n+4}&=\dfrac{(\alpha_2+1+n)(\alpha_1+\beta+2n+2)(\alpha_2+\beta+2n+2)}{(\alpha_1+\beta+3n+3)(\alpha_2+\beta+3n+2)_2},\\
	a^{}_{6n+2}&=\dfrac{(\beta+2n+1)(\alpha_2-\alpha_1+n)(\alpha_2+\beta+2n+1)}{(\alpha_1+\beta+3n+2)(\alpha_2+\beta+3n+1)_2},&
	a^{}_{6n+5}&=\dfrac{(n+1)(\beta+2n+2)(\alpha_2+\beta+2n+2)}{(\alpha_1+\beta+3n+3)_2(\alpha_2+\beta+3n+3)},\\
	a^{}_{6n+3}&=\dfrac{(\beta+2n+1)(\alpha_1-\alpha_2+n+1)(\alpha_1+\beta+2n+2)}{(\alpha_1+\beta+3n+2)_2(\alpha_2+\beta+3n+2)},&
	a^{}_{6n+6}&=\dfrac{(n+1)(\beta+2n+2)(\alpha_1+\beta+2n+3)}{(\alpha_1+\beta+3n+4)(\alpha_2+\beta+3n+3)_2}.
\end{align*}
All of these entries are positive whenever $\alpha_1$ and $\alpha_2$ lies in the semi-band $-1<\alpha_1-\alpha_2<0$.
\end{pro}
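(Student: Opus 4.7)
The plan is to derive the Jacobi--Piñeiro bidiagonal factorization as a rescaled limit $N\to\infty$ of the Hahn factorization furnished by Theorem~\ref{Pro:PBF_Hahn}, rather than verifying it from scratch. The key observation is that the recurrence data satisfy $b_n^{JP}=\lim_{N\to\infty} b_n^{Hahn}/N$, $c_n^{JP}=\lim_{N\to\infty} c_n^{Hahn}/N^2$, $d_n^{JP}=\lim_{N\to\infty} d_n^{Hahn}/N^3$, and the equations \eqref{a1}--\eqref{a8} linking $\{a_n\}$ to $\{b_n,c_n,d_n\}$ are homogeneous of degrees $1$, $2$, and $3$ in the $a_n$'s, respectively. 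Therefore, if each $a_n^{Hahn}(\alpha_1,\alpha_2,\beta,N)/N$ possesses a finite limit as $N\to\infty$, that limit automatically solves the analogous factorization system for Jacobi--Piñeiro and produces a valid factorization \eqref{factorizacionbidiagonalmultiple} of the JP recurrence matrix.

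To compute the limits concretely, first observe that $a_{6n+1}$ and $a_{6n+4}$ already carry explicit factors $(N-2n)$ and $(N-2n-1)$, so $a^{Hahn}_{6n+1}/N$ and $a^{Hahn}_{6n+4}/N$ reproduce the stated Jacobi--Piñeiro expressions term by term. For the remaining four coefficients one must evaluate ratios of $_3F_2(1)$ functions carrying a parameter equal to $-N$. Using the asymptotics $(-N)_k\sim(-N)^k$ as $N\to\infty$, a terminating series $\pFq{3}{2}{-n,-N,c}{d,e}{1}$ is of order $N^n$ with leading coefficient $(c)_n/\bigl((d)_n(e)_n\bigr)$; hence the hypergeometric ratios appearing in $a_{6n+2}$ and $a_{6n+5}$ are bounded, while those in $a_{6n+3}$ and $a_{6n+6}$, whose top index is shifted to $-n-1$, grow like $N$. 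The powers of $N$ balance exactly with the polynomial prefactors so that every $a^{Hahn}_n/N$ has a finite limit, and routine telescoping of Pochhammer symbols such as $(n)_n/(n+1)_n=1/2$ and $(-2n)_n/(-2n+1)_n=2$ recovers the six rational expressions displayed in the proposition.

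The main obstacle is purely computational: tracking the leading-order coefficients and Pochhammer cancellations so that they match the closed forms in the statement. No new hypergeometric identity is required beyond what is already encoded in Theorem~\ref{Pro:PBF_Hahn}, and the step can be double-checked with the companion Mathematica notebook. Finally, the positivity claim follows by inspection of the resulting rational expressions: every factor is positive under the standard parameter assumptions $\alpha_1,\alpha_2,\beta>-1$ except for $(\alpha_2-\alpha_1+n)$ appearing in $a_{6n+2}$ and $(\alpha_1-\alpha_2+n+1)$ appearing in $a_{6n+3}$, and these two requirements are simultaneously satisfied for every $n\geq 0$ precisely when $-1<\alpha_1-\alpha_2<0$, giving the announced PBF semi-band.
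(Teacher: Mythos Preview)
Your argument is correct and follows exactly the paper's approach: obtain the Jacobi--Pi\~neiro coefficients as $\lim_{N\to\infty} a_n^{\mathrm{Hahn}}(\alpha_1,\alpha_2,\beta,N)/N$. You supply considerably more justification than the paper's one-line proof (the homogeneity argument for why the limit respects the factorization equations, and the asymptotics of the ${}_3F_2$ ratios), but the underlying idea is identical.
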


\begin{proof}
The coefficients in \eqref{factorizacionbidiagonalmultiple} can be obtained from the Hahn coefficients in \eqref{aHahnmultiple} by taking the limit as $N$ approaches infinity of $\frac{a_n(\alpha_1, \alpha_2, \beta, N)}{N}$, and this yields the entries $a_n (\alpha_1, \alpha_2, \beta)$.
\end{proof}

\subsection{Multiple Meixner of the first kind}

The coefficients of the recurrence relations, as shown in \eqref{eq:laT} and \eqref{eq:laT truncada}, can be obtained from Hahn \eqref{recurrenciaHahnmultiple} 
through the limit
\begin{align*}
&\lim_{N\rightarrow\infty} b_n\left(c_1N,c_2N,-N,-\beta\right), &
&\lim_{N\rightarrow\infty} c_n\left(c_1N,c_2N,-N,-\beta\right), &
&\lim_{N\rightarrow\infty} d_n\left(c_1N,c_2N,-N,-\beta\right).
\end{align*}
These ones are, cf. \cite[\S 4.2]{Arvesu}, $b_n(\beta,c_1,c_2),c_n(\beta,c_1,c_2)$ and $d_n(\beta,c_1,c_2)$ with:
\begin{align}
\label{recurrenciaMeixnerFKmultiple}
 \left\{\begin{aligned}
 	   b_{2m}&=m\dfrac{1+c_1}{1-c_1}+m\left(\dfrac{c_1}{1-c_1}+\dfrac{c_2}{1-c_2}+1\right)+\dfrac{c_1}{1-c_1}\beta, &
    b_{2m+1}&=m\dfrac{1+c_2}{1-c_2}+(m+1)\left(\dfrac{c_1}{1-c_1}+\dfrac{c_2}{1-c_2}+1\right)+\dfrac{c_2}{1-c_2}\beta,\\
    c_{2m}&=(\beta+2m-1)\left(m\dfrac{c_1}{(1-c_1)^2}+m\dfrac{c_2}{(1-c_2)^2}\right), &
    c_{2m+1}&=(\beta+2m)\left((m+1)\dfrac{c_1}{(1-c_1)^2}+m\dfrac{c_2}{(1-c_2)^2}\right),\\
    d_{2m}&=\dfrac{m(\beta+2m-2)(\beta+2m-1)c_1(c_1-c_2)}{(1-c_1)^3(1-c_2)}, &
    d_{2m+1}&=\dfrac{m(\beta+2m-1)(\beta+2m)c_2(c_2-c_1)}{(1-c_1)(1-c_2)^3}.
 \end{aligned}\right.
\end{align}

\begin{pro}[Multiple Meixner of the first kind  bidiagonal factorization]
	The entries $a_n (\beta,c_1,c_2)$ of a bidiagonal factorization \eqref{factorizacionbidiagonalmultiple} of the Meixner of the first kind recursion matrix with coefficients given in \eqref{recurrenciaMeixnerFKmultiple} are as follows:
	\begin{align*}
		a^{}_{6n+1}&=\dfrac{(\beta+2n)c_1}{1-c_1}, &
		a^{}_{6n+4}&=\dfrac{(\beta+2n+1)c_2}{1-c_2},\\
		a^{}_{6n+2}&=    \dfrac{(n)_n(\beta+2n)(c_2-c_1)}{(n+1)_n(1-c_1)(1-c_2)}\dfrac{\pFq{2}{1}{-n,\beta}{-2n+1}{\frac{c_1-c_2}{1-c_2}}}{\pFq{2}{1}{-n,\beta}{-2n}{\frac{c_1-c_2}{1-c_2}}},
		&a^{}_{6n+5}&=\dfrac{(n+1)(\beta+2n+1)(c_1-c_2)}{(2n+1)(1-c_1)(1-c_2)}\dfrac{\pFq{2}{1}{-n,\beta}{-2n}{\frac{c_1-c_2}{1-c_2}}}{\pFq{2}{1}{-n-1,\beta}{-2n-1}{\frac{c_1-c_2}{1-c_2}}},\\
		a^{}_{6n+3}&=\dfrac{2n+1}{1-c_1}\dfrac{\pFq{2}{1}{-n-1,\beta}{-2n-1}{\frac{c_1-c_2}{1-c_2}}}{\pFq{2}{1}{-n,\beta}{-2n}{\frac{c_1-c_2}{1-c_2}}}, &
		a^{}_{6n+6}&=\dfrac{2(n+1)}{1-c_2}\dfrac{\pFq{2}{1}{-n-1,\beta}{-2n-2}{\frac{c_1-c_2}{1-c_2}}}{\pFq{2}{1}{-n-1,\beta}{-2n-1}{\frac{c_1-c_2}{1-c_2}}}.
	\end{align*}
\end{pro}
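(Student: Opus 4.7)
The plan is to obtain this factorization as a confluent limit of the Hahn factorization in Theorem \ref{Pro:PBF_Hahn}, in the same spirit as for Jacobi--Piñeiro. The relevant substitution, which is precisely the one that reduces \eqref{recurrenciaHahnmultiple} to \eqref{recurrenciaMeixnerFKmultiple} (cf. \cite[\S 4.2]{Arvesu}), is $\alpha_1 \mapsto c_1 M,\ \alpha_2 \mapsto c_2 M,\ \beta \mapsto -M,\ N \mapsto -\beta$, followed by $M\to\infty$. Applying this prescription to the eight identities \eqref{a1}--\eqref{a8}, which by Theorem \ref{Pro:PBF_Hahn} hold for all admissible Hahn parameters, and checking that each side has a finite limit, reduces the proof to identifying $\lim_{M\to\infty} a_n(c_1M, c_2M, -M, -\beta)$ with the Meixner I expression in the statement.

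For $a_{6n+1}$ and $a_{6n+4}$, the limit is purely algebraic: every factor of the form $\alpha_j+\beta+\text{const}$ scales as $(c_j-1)M$ while every factor $\alpha_j+\text{const}$ scales as $c_j M$, and the leading terms collapse to $\tfrac{(\beta+2n)c_1}{1-c_1}$ and $\tfrac{(\beta+2n+1)c_2}{1-c_2}$, respectively. For the remaining four coefficients, which contain ratios of ${}_3F_2$ series, the essential tool is the confluent limit
\begin{equation*}
\pFq{3}{2}{-n,\beta,(c_2-c_1)M-n}{-2n+k,(c_2-1)M+n+j}{1} \;\xrightarrow[M\to\infty]{}\; \pFq{2}{1}{-n,\beta}{-2n+k}{\dfrac{c_1-c_2}{1-c_2}},
\end{equation*}
which is valid because the upper parameter $-n$ truncates the sum after finitely many terms, allowing termwise passage to the limit; for each $k\le n$ one has $\bigl((c_2-c_1)M-n\bigr)_k/\bigl((c_2-1)M+n+j\bigr)_k \to \bigl((c_1-c_2)/(1-c_2)\bigr)^k$. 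Using this in the Hahn quotients of ${}_3F_2$, together with $-N\mapsto\beta$ and the leading-order rational prefactor limits, gives precisely the advertised ${}_2F_1$ ratios in $a_{6n+2}, a_{6n+3}, a_{6n+5}, a_{6n+6}$.

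The main obstacle is bookkeeping: in each Hahn coefficient one must verify that the dominant balance of $M$-powers in numerator and denominator agrees, so the limit is finite and nonzero, and that subleading corrections contribute nothing further. This reduces to a factor-by-factor inspection, just as in the Jacobi--Piñeiro case. Once the six limits are established, the factorization $T_m = L_{1,m}L_{2,m}U_m$ for the Meixner I truncations follows at once from the $M\to\infty$ limit of its Hahn counterpart, since \eqref{a1}--\eqref{a8} are polynomial identities preserved under taking pointwise limits of both sides.
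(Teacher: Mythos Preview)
Your proposal is correct and follows essentially the same approach as the paper: the paper's proof consists of the single line ``the coefficients can be obtained from Hahn \eqref{aHahnmultiple} through $\lim_{N\to\infty}a_n(c_1N,c_2N,-N,-\beta)$,'' which is exactly your substitution (with $M$ in place of $N$). You supply the details the paper omits, in particular the termwise confluent limit reducing the ${}_3F_2$ ratios to ${}_2F_1$ ratios and the dominant-balance check on the rational prefactors, but the underlying idea is identical.
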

\begin{proof}
The coefficients \eqref{factorizacionbidiagonalmultiple}  can be obtained from Hahn \eqref{aHahnmultiple}
through $\lim\limits_{N\rightarrow\infty}a_n(c_1N,c_2N,-N,-\beta)$.
\end{proof}

\begin{rem}
	These ones can never be all positive because of the factor $(c_2-c_1)$.
\end{rem}

\subsection{Multiple Meixner of the second kind}

The coefficients of the recurrence relations, as shown in \eqref{eq:laT} and \eqref{eq:laT truncada}, can be obtained from Hahn \eqref{recurrenciaHahnmultiple} 
through the limit
\begin{align*}    &\lim_{N\rightarrow\infty}b_n\left(\beta_1-1,\beta_2-1,\frac{1-c}{c}N,N\right), &\lim_{N\rightarrow\infty}c_n\left(\beta_1-1,\beta_2-1,\frac{1-c}{c}N,N\right), & &\lim_{N\rightarrow\infty}d_n\left(\beta_1-1,\beta_2-1,\frac{1-c}{c}N,N\right).
\end{align*}
The corresponding recurrence coefficients  $b_{m}(\beta_1,\beta_2,c)$, $c_{m}(\beta_1,\beta_2,c)$  and $d_{m}(\beta_1,\beta_2,c)$  are, cf. \cite[\S 4.3]{Arvesu}
\begin{align}
\label{recurrenciaMeixnerSKmultiple}
\left\{\begin{aligned}
	 b_{2m}&=2m+\dfrac{c}{1-c}(\beta_1+3m),&
 b_{2m+1}&=2m+1+\dfrac{c}{1-c}(\beta_2+3m+1),\\
 c_{2m}&=\dfrac{c}{(1-c)^2}m(\beta_1+\beta_2+3m-2),&
 c_{2m+1}&=\dfrac{c}{(1-c)^2}((m+1)\beta_1+m(\beta_2+3m+1)),\\
 d_{2m}&=\dfrac{c^2}{(1-c)^3}m(m+\beta_1-1)(m+\beta_1-\beta_2),&
 d_{2m+1}&=\dfrac{c^2}{(1-c)^3}m(m+\beta_2-1)(m+\beta_2-\beta_1),
\end{aligned}\right.
\end{align}
which are all positive if $-1<\beta_1-\beta_2<1$.

\begin{pro}[Multiple Meixner of the second kind  bidiagonal factorization]
The entries $a_n (\beta_1,\beta_2,c)$ of a bidiagonal factorization \eqref{factorizacionbidiagonalmultiple} of the Meixner of the first kind recursion matrix with coefficients given in \eqref{recurrenciaMeixnerSKmultiple} are as follows:
\begin{align*}
	a^{}_{6n+1}&=\dfrac{(\beta_1+n)c}{1-c}, &
	a^{}_{6n+4}&=\dfrac{(\beta_2+n)c}{1-c},\\
	a^{}_{6n+2}&=\dfrac{(n)_n(\beta_2-\beta_1+n)c}{(n+1)_n(1-c)}\dfrac{\pFq{2}{1}{-n,\beta_2-\beta_1-n}{-2n+1}{\frac{c}{c-1}}}{\pFq{2}{1}{-n,\beta_2-\beta_1-n}{-2n}{\frac{c}{c-1}}},&
	a^{}_{6n+5}&=\dfrac{(n+1)(\beta_1-\beta_2+n+1)c}{(2n+1)(1-c)^2}\dfrac{\pFq{2}{1}{-n,\beta_2-\beta_1-n}{-2n}{\frac{c}{c-1}}}{\pFq{2}{1}{-n-1,\beta_2-\beta_1-n-1}{-2n-1}{\frac{c}{c-1}}},\\
	a^{}_{6n+3}=&(2n+1)
	\dfrac{\pFq{2}{1}{-n-1,\beta_2-\beta_1-n-1}{-2n-1}{\frac{c}{c-1}}}{\pFq{2}{1}{-n,\beta_2-\beta_1-n}{-2n}{\frac{c}{c-1}}}, &
	a^{}_{6n+6}=&\dfrac{2(n+1)}{1-c}
	\dfrac{\pFq{2}{1}{-n-1,\beta_2-\beta_1-n-1}{-2n-2}{\frac{c}{c-1}}}{\pFq{2}{1}{-n-1,\beta_2-\beta_1-n-1}{-2n-1}{\frac{c}{c-1}}}.
\end{align*}
These coefficients  are all positive in the semi-band  $-1<\beta_1-\beta_2<0$.
\end{pro}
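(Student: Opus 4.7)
The plan is to mirror the one-line proof of the Multiple Meixner of the first kind proposition and deduce the entire statement from Theorem \ref{Pro:PBF_Hahn} by the limit
\begin{align*}
a_n(\beta_1,\beta_2,c) = \lim_{N\to\infty} a_n\!\left(\beta_1-1,\beta_2-1,\tfrac{1-c}{c}N,N\right),
\end{align*}
applied factor by factor to the Hahn bidiagonal factorization \eqref{factorizacionbidiagonalmultiple}. Since the Hahn recursion coefficients \eqref{recurrenciaHahnmultiple} converge under this scaling precisely to the Meixner II coefficients \eqref{recurrenciaMeixnerSKmultiple}, the limiting matrices $\lim L_{1,m}$, $\lim L_{2,m}$ and $\lim U_m$ yield a bidiagonal factorization of the Meixner II recursion matrix of the claimed shape.

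For the rational entries $a_{6n+1}$ and $a_{6n+4}$ I would do the direct substitution in \eqref{aHahnmultiple} and cancel the $N$-dependent factors, which gives the stated formulas at once. For the four hypergeometric entries, the essential observation is that every ${}_3F_2$ in \eqref{aHahnmultiple} is terminating (the upper parameter $-n$ or $-n-1$ truncates it to a polynomial), so the limit commutes with the finite summation. Term by term, the asymptotics
\begin{align*}
\lim_{N\to\infty}\frac{(-N)_k}{\bigl(\beta_2+n+r+\tfrac{1-c}{c}N\bigr)_k}=\left(\frac{c}{c-1}\right)^k,\qquad r\in\Z,
\end{align*}
collapse each Hahn ${}_3F_2$ at unit argument into a ${}_2F_1$ at argument $c/(c-1)$, matching numerators and denominators of the ratios defining $a_{6n+2}$, $a_{6n+3}$, $a_{6n+5}$, $a_{6n+6}$. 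Collecting the remaining polynomial factors in $N$, $\alpha_1$, $\alpha_2$, $\beta$ and taking their limits is straightforward and produces the announced expressions.

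For positivity in the semi-band $-1<\beta_1-\beta_2<0$ (with $c\in(0,1)$), the substitution $\alpha_1=\beta_1-1$, $\alpha_2=\beta_2-1$ places the Hahn parameters in the Hahn semi-band $-1<\alpha_1-\alpha_2<0$, where Theorem \ref{Pro:PBF_Hahn} already guarantees a positive bidiagonal factorization. Passing to the limit preserves nonnegativity, and the rational prefactors $\beta_1+n$, $\beta_2+n$, $\beta_2-\beta_1+n$, $\beta_1-\beta_2+n+1$, together with $c/(1-c)$ and $c/(1-c)^2$, are manifestly strictly positive there. What remains is strict positivity (nonvanishing) of the limiting ${}_2F_1$ factors. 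This is the main obstacle I anticipate: because the argument $c/(c-1)$ is negative the series is not termwise positive, so the straightforward sign argument used for the rational factors is not available. I would dispose of it by applying Pfaff's transformation
\begin{align*}
\pFq{2}{1}{-n,\beta_2-\beta_1-n}{-2n}{\tfrac{c}{c-1}} = (1-c)^{-n}\,\pFq{2}{1}{-n,\beta_1-\beta_2-n}{-2n}{c},
\end{align*}
and its analogues for the other three ${}_2F_1$'s, rewriting them at argument $c\in(0,1)$ and then tracking the signs of the truncated Pochhammer symbols $(-n)_k$, $(\beta_1-\beta_2-n)_k$ and $(-2n)_k$ on the semi-band; alternatively, strict positivity may be read off as the limit of the strictly positive Hahn ${}_3F_2$ ratios together with a direct check at $c=0$ that the limiting ${}_2F_1$ is nonzero.
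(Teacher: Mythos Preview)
Your approach is essentially the same as the paper's: the paper's proof is the single line ``the entries $a_n(\beta_1,\beta_2,c)$ can be obtained from Hahn \eqref{aHahnmultiple} through $\lim_{N\to\infty}a_n\bigl(\beta_1-1,\beta_2-1,\tfrac{1-c}{c}N,N\bigr)$'', and you carry out exactly this limit, with the added (and correct) explanation of how the terminating ${}_3F_2$'s collapse to ${}_2F_1$'s via $(-N)_k/\bigl(\tfrac{1-c}{c}N+\text{const}\bigr)_k\to(c/(c-1))^k$. Your discussion of positivity via Pfaff's transformation is more detailed than anything the paper provides for this case (the paper simply asserts positivity on the semi-band without further argument), so on that point you are supplying justification the paper leaves implicit.
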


\begin{proof}
The entries $a_n(\beta_1,\beta_2,c)$ in  \eqref{factorizacionbidiagonalmultiple}  can be obtained from Hahn \eqref{aHahnmultiple}
through
$    \lim\limits_{N\rightarrow\infty}a_n\left(\beta_1-1,\beta_2-1,\frac{1-c}{c}N,N\right)$.
\end{proof}

\subsection{Multiple Kravchuk}

The coefficients of the recurrence relations, as shown in \eqref{eq:laT} and \eqref{eq:laT truncada}, can be obtained from Hahn \eqref{recurrenciaHahnmultiple} 
through the limit
\begin{align*}
&\lim_{t\rightarrow\infty} b_n\left(\dfrac{p_1}{1-p_1}t,\dfrac{p_2}{1-p_2}t,t,N\right),&
&\lim_{t\rightarrow\infty} c_n\left(\dfrac{p_1}{1-p_1}t,\dfrac{p_2}{1-p_2}t,t,N\right),&
&\lim_{t\rightarrow\infty} d_n\left(\dfrac{p_1}{1-p_1}t,\dfrac{p_2}{1-p_2}t,t,N\right)
\end{align*}
These recursion coefficients  $b_n(p_1,p_2,N)$, $c_n(p_1,p_2,N)$ and $d_n(p_1,p_2,N)$, are, cf. \cite[\S 4.4]{Arvesu},
\begin{align}
\label{recurrenciaKravchukmultiple}
  b_{2m}&=2m+(N-3m)p_1-mp_2, &
   b_{2m+1}&=2m+1-(m+1)p_1+(N-3m-1)p_2,\\
    c_{2m}&=(N-2m+1)m(p_1(1-p_1)+p_2(1-p_2)), &
    c_{2m+1}&=(N-2m)\left((m+1)p_1(1-p_1)+mp_2(1-p_2)\right),\\
    d_{2m}&=(N-2m+1)(N-2m+2)mp_1(1-p_1)(p_1-p_2),&
    d_{2m+1}&=(N-2m)(N-2m+1)mp_2(1-p_2)(p_2-p_1).
\end{align}

\begin{pro}[Multiple Kravchuk  bidiagonal factorization]
	The entries $a_n (p_1,p_2,N)$ of a bidiagonal factorization \eqref{factorizacionbidiagonalmultiple} of the multiple Kravchuk recursion matrix with coefficients given in \eqref{recurrenciaKravchukmultiple} are as follows:
\begin{align*}
	a^{}_{6n+1}&=(N-2n)p_1, & a^{}_{6n+4}&=(N-2n-1)p_2,\\
	a^{}_{6n+2}&=\dfrac{(n)_n(N-2n)(p_2-p_1)}{(n+1)_n}
	\dfrac{\pFq{2}{1}{-n,-N}{-2n+1}{\frac{p_2-p_1}{1-p_1}}}{\pFq{2}{1}{-n,-N}{-2n}{\frac{p_2-p_1}{1-p_1}}}, &
	a^{}_{6n+5}&=\dfrac{(n+1)(N-2n-1)(p_1-p_2)}{(2n+1)}
	\dfrac{\pFq{2}{1}{-n,-N}{-2n}{\frac{p_2-p_1}{1-p_1}}}{\pFq{2}{1}{-n-1,-N}{-2n-1}{\frac{p_2-p_1}{1-p_1}}},\\
	a^{}_{6n+3}&=(2n+1)(1-p_1)\dfrac{\pFq{2}{1}{-n-1,-N}{-2n-1}{\frac{p_2-p_1}{1-p_1}}}{\pFq{2}{1}{-n,-N}{-2n}{\frac{p_2-p_1}{1-p_1}}}, &
	a^{}_{6n+6}&=
	2(n+1)(1-p_2)\dfrac{\pFq{2}{1}{-n-1,-N}{-2n-2}{\frac{p_2-p_1}{1-p_1}}}{\pFq{2}{1}{-n-1,-N}{-2n-1}{\frac{p_2-p_1}{1-p_1}}}.
\end{align*}
\end{pro}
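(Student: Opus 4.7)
The plan is to follow the same template used for the Jacobi--Piñeiro and multiple Meixner cases: to obtain the Kravchuk bidiagonal factorization entries from the Hahn ones in \eqref{aHahnmultiple} by applying the Askey-scheme degeneration
\begin{align*}
a_n(p_1,p_2,N)=\lim_{t\to\infty}a_n\bigl(\tfrac{p_1}{1-p_1}t,\tfrac{p_2}{1-p_2}t,t,N\bigr),
\end{align*}
which is precisely the limit that turns the Hahn recursion coefficients \eqref{recurrenciaHahnmultiple} into the Kravchuk ones \eqref{recurrenciaKravchukmultiple}. Since the Hahn truncation $T_m$ converges entrywise to the Kravchuk $T_m$, and the bidiagonal factorization fixed in Theorem \ref{Pro:PBF_Hahn} depends rationally on the entries of $T_m$ through nonvanishing pivots, the limit of the factorization is the factorization of the limit, and the task reduces to computing six pointwise limits.

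For $a_{6n+1}$ and $a_{6n+4}$ only elementary asymptotics are needed: a direct count of leading $t$-coefficients in each factor of the Hahn formula gives $(N-2n)p_1$ and $(N-2n-1)p_2$, respectively. For the four entries that carry ratios of $_3F_2(1)$ functions, the substitution makes both $\alpha_2-\alpha_1-n$ and $\alpha_2+\beta+n+c$ scale linearly in $t$, with
\begin{align*}
\lim_{t\to\infty}\frac{\alpha_2-\alpha_1-n}{\alpha_2+\beta+n+c}=\frac{p_2-p_1}{1-p_1},
\end{align*}
so that the Pochhammer ratios satisfy $(\alpha_2-\alpha_1-n)_k/(\alpha_2+\beta+n+c)_k\to\bigl((p_2-p_1)/(1-p_1)\bigr)^k$. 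Because each series is in fact a finite sum, truncated by the Pochhammer factor $(-n)_k$ or $(-n-1)_k$ in the numerator, termwise passage to the limit is legitimate and every $_3F_2$ collapses to the asserted $_2F_1$ evaluated at $(p_2-p_1)/(1-p_1)$. The accompanying rational prefactors simplify after cancelling equal powers of $t$ between numerator and denominator.

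The main obstacle I anticipate is purely bookkeeping: in each of $a_{6n+2}$, $a_{6n+3}$, $a_{6n+5}$, $a_{6n+6}$ one must check that numerator and denominator carry exactly the same power of $t$, so that no spurious factor of $t$ or $t^{-1}$ survives and the finite limit is the expression claimed in the statement. Once that is verified, everything else reduces to Theorem \ref{Pro:PBF_Hahn}, the rational continuity of the bidiagonal factorization under parameter limits, and the elementary termwise $_3F_2\to{}_2F_1$ limit displayed above.
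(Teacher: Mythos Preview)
Your proposal is correct and follows exactly the paper's approach: the paper's proof is the single line that the coefficients are obtained from the Hahn ones \eqref{aHahnmultiple} via $\lim_{t\to\infty}a_n\bigl(\tfrac{p_1}{1-p_1}t,\tfrac{p_2}{1-p_2}t,t,N\bigr)$. You have simply spelled out the termwise ${}_3F_2\to{}_2F_1$ degeneration and the power-of-$t$ bookkeeping that the paper leaves implicit.
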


\begin{proof}
	The \eqref{factorizacionbidiagonalmultiple} coefficients can be obtained from Hahn \eqref{aHahnmultiple}
through
$  \lim\limits _{t\rightarrow\infty}a_n\left(\frac{p_1}{1-p_1}t,\frac{p_2}{1-p_2}t,t,N\right)$. 
\end{proof}

These ones can never be all positive because of the factor $(p_2-p_1)$.

\subsection{Multiple Laguerre of first kind}

The recurrence coefficients \eqref{eq:laT}, \eqref{eq:laT truncada} can be obtained indistictively from Jacobi--Piñeiro \eqref{recurrenciaJPmultiple} and Meixner of second kind \eqref{recurrenciaMeixnerSKmultiple} respectively through
\begin{align*}
    &\lim_{\beta\rightarrow\infty}\beta b_n(\alpha_1,\alpha_2,\beta), &&\lim_{\beta\rightarrow\infty}\beta^2c_n(\alpha_1,\alpha_2,\beta),
    &&\lim_{\beta\rightarrow\infty}\beta^3d_n(\alpha_1,\alpha_2,\beta),\\
   & \lim_{c\rightarrow1}(1-c)b_n(\alpha_1+1,\alpha_2+1,c), &&\lim_{c\rightarrow1}{(1-c)^2}c_n(\alpha_1+1,\alpha_2+1,c), &&\lim_{c\rightarrow1}{(1-c)^3}d_n(\alpha_1+1,\alpha_2+1,c).
\end{align*}
The recursion  coefficients $b_n(\alpha_1,\alpha_2)$, $c_n(\alpha_1,\alpha_2)$ and $d_n(\alpha_1,\alpha_2)$ are \cite[\S 3.2]{Clasicos}
\begin{align*}
 b_{2m}&=3m+1+\alpha_1,&
 b_{2m+1}&=3m+2+\alpha_2,\\
 c_{2m}&=m(3m+\alpha_1+\alpha_2),&
 c_{2m+1}&=3m^2+m(\alpha_1+\alpha_2+3)+\alpha_1+1,\\
 d_{2m}&=m(m+\alpha_1)(m+\alpha_1-\alpha_2),
 &
 d_{2m+1}&=m(m+\alpha_2)(m+\alpha_2-\alpha_1),
\end{align*}
which are all positive if $-1<\alpha_1-\alpha_2<1$.

\begin{pro}[Multiple Laguerre of first kind  bidiagonal factorization]
	The entries $a_n (\alpha_1,\alpha_2)$ of a bidiagonal factorization \eqref{factorizacionbidiagonalmultiple} of the  Laguerre of first kind recursion matrix with coefficients given in \eqref{recurrenciaKravchukmultiple} are as follows:
\begin{align}
	a^{}_{6n+1}&=\alpha_1+1+n,&
	a^{}_{6n+4}&=\alpha_2+1+n,\\
	a^{}_{6n+2}&=\alpha_2-\alpha_1+n,&a^{}_{6n+5}&=n+1,\\
	a^{}_{6n+3}&=\alpha_1-\alpha_2+n+1,&
	a^{}_{6n+6}&=n+1.
\end{align}
These coefficients are all positive whenever $-1<\alpha_1-\alpha_2<0$.
\end{pro}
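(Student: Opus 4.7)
The plan is to obtain the Laguerre I coefficients as the termwise limit
\[
a_n(\alpha_1,\alpha_2) \;=\; \lim_{\beta\to\infty}\beta\, a_n^{\mathrm{JP}}(\alpha_1,\alpha_2,\beta),
\]
applied to the Jacobi--Piñeiro bidiagonal factorization established in the preceding proposition. This parallels precisely the way the recurrence coefficients themselves are passed to the limit in the statement of the proposition.

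First I would verify the termwise asymptotics. Each $a_n^{\mathrm{JP}}$ is a rational function of $\beta$ whose numerator is of degree two and denominator of degree three in $\beta$, so $a_n^{\mathrm{JP}}\sim c_n/\beta$, and in each of the six residue classes modulo six the extracted leading coefficient $c_n$ matches the expression stated. For instance in $a_{6n+1}^{\mathrm{JP}}$ the leading $\beta^{2}$ of $(\alpha_1+\beta+2n+1)(\alpha_2+\beta+2n+1)$ cancels two of the three $\beta$'s in the denominator $(\alpha_1+\beta+3n+1)_2(\alpha_2+\beta+3n+1)$, leaving $\alpha_1+1+n$; the other five classes are equally short.

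Next I would argue that this termwise limit preserves the factorization. The system \eqref{a1}--\eqref{a8} only encodes the product $L_{1,m}L_{2,m}U_m$, so it applies verbatim to every descendant, with the RHS replaced by the appropriate recurrence coefficients. Crucially, each identity is homogeneous in the $a_n$'s of degree $1$, $2$, or $3$ according to whether its RHS is a $b$-, $c$-, or $d$-coefficient. Since the Laguerre I recurrence coefficients arise as $\lim_{\beta\to\infty}\beta^{k}$ of the Jacobi--Piñeiro ones with $k=1,2,3$ respectively, multiplying each identity through by the matching power $\beta^{k}$ and then letting $\beta\to\infty$ converts it cleanly into the analogous identity for the Laguerre I coefficients. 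Hence the limiting $a_n(\alpha_1,\alpha_2)$ solve the correct system.

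Positivity in the semi-band is then immediate: $a_{6n+1}$, $a_{6n+4}$, $a_{6n+5}$, $a_{6n+6}$ are manifestly positive for $\alpha_1,\alpha_2>-1$ and $n\ge 0$, while $a_{6n+2}=\alpha_2-\alpha_1+n$ and $a_{6n+3}=\alpha_1-\alpha_2+n+1$ are simultaneously positive for every $n\ge 0$ precisely when both are positive at $n=0$, i.e.\ exactly on $-1<\alpha_1-\alpha_2<0$. I do not anticipate serious obstacles: everything is rational, the limits are elementary, and no hypergeometric manipulation is required once the Jacobi--Piñeiro factorization is available. As a cross-check one could instead take $\lim_{c\to 1}(1-c)\,a_n(\alpha_1+1,\alpha_2+1,c)$ from the Meixner II formulas, but that route requires the asymptotics of the $_2F_1$ at argument $c/(c-1)$ as $c\to 1$, which is avoidable.
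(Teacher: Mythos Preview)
Your proposal is correct and follows essentially the same route as the paper: obtain the Laguerre~I entries as $\lim_{\beta\to\infty}\beta\,a_n^{\mathrm{JP}}(\alpha_1,\alpha_2,\beta)$ from the Jacobi--Pi\~neiro factorization (the paper also notes the alternative Meixner~II limit $\lim_{c\to1}(1-c)a_n(\alpha_1+1,\alpha_2+1,c)$). Your homogeneity argument for why the termwise limit preserves the system \eqref{a1}--\eqref{a8} and your explicit positivity check are more detailed than what the paper records, but the underlying idea is identical.
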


\begin{proof}
	Coefficients in \eqref{factorizacionbidiagonalmultiple}  can be obtained from Jacobi--Piñeiro and Meixner of second kind
through the respective limits
$   \lim\limits_{\beta\rightarrow\infty}\beta a_n(\alpha_1,\alpha_2,\beta)$ or 
  $  \lim\limits_{c\rightarrow1}(1-c)a_n(\alpha_1+1,\alpha_2+1,c)$.
\end{proof}

\subsection{Multiple Laguerre of second kind}

The recurrence coefficients \eqref{eq:laT}, \eqref{eq:laT truncada} can be obtained indistictively from Jacobi--Piñeiro \eqref{recurrenciaJPmultiple} and Meixner of first kind \eqref{recurrenciaMeixnerFKmultiple}, respectively, through
\begin{align*}
    &\lim_{t\rightarrow\infty}(-t)\left(b_{n}\left(c_1t,c_2t,\alpha_0+1\right)-1\right),&
        &\lim_{t\rightarrow\infty}(-t)^2c_{n}\left(c_1t,c_2t,\alpha_0+1\right),&
         &\lim_{t\rightarrow\infty}\left(-t\right)^3d_{n}\left(c_1t,c_2t,\alpha_0+1\right),\\
    &\lim_{t\rightarrow\infty}\dfrac{1}{t}b_n\left(\alpha_0+1,\dfrac{t}{t+c_1},\dfrac{t}{t+c_2}\right), &    
    &\lim_{t\rightarrow\infty}\dfrac{1}{t^2}c_n\left(\alpha_0+1,\dfrac{t}{t+c_1},\dfrac{t}{t+c_2}\right), &
    &\lim_{t\rightarrow\infty}\dfrac{1}{t^3}d_n\left(\alpha_0+1,\dfrac{t}{t+c_1},\dfrac{t}{t+c_2}\right).
\end{align*}
These coefficients $b_n(\alpha_0,c_1,c_2)$, $c_n(\alpha_0,c_1,c_2)$ and $d_n(\alpha_0,c_1,c_2)$ are, cf. \cite[\S 3.3]{Clasicos},
\begin{align}\label{recurrenciaLaguerreSKmultiple}
  \left\{  \begin{aligned}
  	b_{2m}&=\dfrac{m(c_1+3c_2)+c_2+\alpha_0 c_2}{c_1c_2}, &
    b_{2m+1}&=\dfrac{m(3c_1+c_2)+2c_1+c_2+\alpha_0 c_1}{c_1c_2},\\
    c_{2m}&=\dfrac{(2m+\alpha_0)m(c^2_1+c^2_2)}{c^2_1c^2_2}, &
    c_{2m+1}&=\dfrac{(2m+1+\alpha_0)\left(mc_1^2+(m+1)c_2^2\right)}{c^2_1c^2_2},\\
    d_{2m}&=\dfrac{m(2m+\alpha_0)(2m+\alpha_0-1)(c_2-c_1)}{c^3_1c_2}, &
    d_{2m+1}&=\dfrac{m(2m+\alpha_0)(2m+\alpha_0+1)(c_1-c_2)}{c_1c^3_2}.
  \end{aligned}\right.
\end{align}

\begin{pro}[Multiple Laguerre of second kind  bidiagonal factorization]
	The entries $a_n (\alpha_1,\alpha_2)$ of a bidiagonal factorization \eqref{factorizacionbidiagonalmultiple} of the  Laguerre of first kind recursion matrix with coefficients given in \eqref{recurrenciaLaguerreSKmultiple} are as follows:
	\begin{align*}
		a^{}_{6n+1}&=\dfrac{\alpha_0+2n+1}{c_1}&
		a^{}_{6n+4}&=\dfrac{\alpha_0+2n+2}{c_2},\\
		a^{}_{6n+2}&=\dfrac{(n)_n(\alpha_0+2n+1)(c_1-c_2)}{(n+1)_nc_1c_2}\dfrac{\pFq{2}{1}{-n,\alpha_0+1}{-2n+1}{\frac{c_2-c_1}{c_2}}}{\pFq{2}{1}{-n,\alpha_0+1}{-2n}{\frac{c_2-c_1}{c_2}}},&a^{}_{6n+5}&= \dfrac{(n+1)(\alpha_0+2n+2)(c_2-c_1)}{(2n+1)c_1c_2}\dfrac{\pFq{2}{1}{-n,\alpha_0+1}{-2n}{\frac{c_2-c_1}{c_2}}}{\pFq{2}{1}{-n-1,\alpha_0+1}{-2n-1}{\frac{c_2-c_1}{c_2}}},\\
		a^{}_{6n+3}&=\dfrac{2n+1}{c_1}\dfrac{\pFq{2}{1}{-n-1,\alpha_0+1}{-2n-1}{\frac{c_2-c_1}{c_2}}}{\pFq{2}{1}{-n,\alpha_0+1}{-2n}{\frac{c_2-c_1}{c_2}}},&
		a^{}_{6n+6}&=\dfrac{2(n+1)}{c_2}\dfrac{\pFq{2}{1}{-n-1,\alpha_0+1}{-2n-2}{\frac{c_2-c_1}{c_2}}}{\pFq{2}{1}{-n-1,\alpha_0+1}{-2n-1}{\frac{c_2-c_1}{c_2}}}.
	\end{align*}
\end{pro}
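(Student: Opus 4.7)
The plan is to deduce the entries $a_n(\alpha_0,c_1,c_2)$ by taking the appropriate scaling limit of the multiple Meixner of first kind coefficients $a_n(\beta,c_1,c_2)$ already obtained in the preceding subsection; an alternative route, useful as a mutual check, would start from the Jacobi--Piñeiro factorization. Concretely, I would evaluate
\[
\lim_{t\to\infty}\frac{1}{t}\,a_n\!\left(\alpha_0+1,\frac{t}{t+c_1},\frac{t}{t+c_2}\right),
\]
which is precisely the substitution that transports the multiple Meixner of first kind recurrence matrix onto the Laguerre of second kind one in \eqref{recurrenciaLaguerreSKmultiple}. Since the factorization $T_m=L_{1,m}L_{2,m}U_m$ is an algebraic identity whose entries depend continuously on the parameters, applying the uniform scaling $a_n\mapsto a_n/t$ to every factor still produces a valid bidiagonal factorization in the limit; this is the structural observation that legitimizes the procedure, in direct analogy with the Laguerre of first kind proof given above.

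The computation splits naturally along the six residues modulo~$6$. For $a_{6n+1}$ and $a_{6n+4}$, which are elementary rational expressions in Meixner of first kind, the substitution gives $(\alpha_0+2n+1)t/c_1$ and $(\alpha_0+2n+2)t/c_2$ respectively, so the $1/t$ rescaling immediately yields the announced limits. For the four remaining entries, the key point is to track the image of the ${}_2F_1$ argument and of the rational prefactor. Under $c_i\mapsto t/(t+c_i)$ one checks
\[
\frac{c_1-c_2}{1-c_2}\;\longmapsto\;\frac{t(c_2-c_1)}{c_2(t+c_1)}\;\xrightarrow{\;t\to\infty\;}\;\frac{c_2-c_1}{c_2},
\]
matching the argument of every ${}_2F_1$ in the statement; since these hypergeometric functions terminate (negative integer upper parameter) they are polynomials in the argument and the ratios pass to the limit by continuity.

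The residual task is the bookkeeping of rational prefactors. The recurring combination $(c_2-c_1)/((1-c_1)(1-c_2))$ becomes $t(c_1-c_2)/(c_1c_2)$ under the substitution, so after dividing by $t$ one recovers the factor $(c_1-c_2)/(c_1c_2)$ appearing in the proposed formulas for $a_{6n+2}$ and $a_{6n+5}$; analogous elementary manipulations produce the factors $1/c_1$ and $1/c_2$ in $a_{6n+3}$ and $a_{6n+6}$. I expect the main (though still routine) obstacle to be verifying that, across the six residues, the surviving powers of $t$ always cancel exactly against the uniform $1/t$ rescaling so that every limit is finite and nonzero. An independent confirmation can be obtained by repeating the computation via $\lim_{t\to\infty}(-t)\,a_n(c_1t,c_2t,\alpha_0+1)$ from the Jacobi--Piñeiro factorization, which must yield the same final expressions.
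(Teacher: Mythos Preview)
Your proposal is correct and follows exactly the paper's approach: the paper's proof consists of the single sentence that the coefficients are obtained from the Meixner of first kind factorization via $\lim_{t\to\infty}\frac{1}{t}a_n\!\left(\alpha_0+1,\frac{t}{t+c_1},\frac{t}{t+c_2}\right)$, which is precisely the limit you propose and whose elementary bookkeeping you have already verified in detail.
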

\begin{proof}
	
The \eqref{factorizacionbidiagonalmultiple} coefficients can be obtained from Meixner  of first kind
through
$  \lim\limits_{t\rightarrow\infty}\dfrac{1}{t}a_n\left(\alpha_0+1,\dfrac{t}{t+c_1},\dfrac{t}{t+c_2}\right)$.
\end{proof}

These ones can never be all positive becasuse of the factor $(c_2-c_1)$.

\subsection{Multiple Charlier}

The recurrence coefficients \eqref{eq:laT}, \eqref{eq:laT truncada} can be obtained indistinctively from Meixner of first kind \eqref{recurrenciaMeixnerFKmultiple} and Kravchuk \eqref{recurrenciaKravchukmultiple} respectively through 
\begin{align*}
    &\lim_{\beta\rightarrow\infty} b_n\left(\beta,\dfrac{b_1}{\beta},\dfrac{b_2}{\beta}\right),&
     &\lim_{\beta\rightarrow\infty} c_n\left(\beta,\dfrac{b_1}{\beta},\dfrac{b_2}{\beta}\right),&
      &\lim_{\beta\rightarrow\infty} d_n\left(\beta,\dfrac{b_1}{\beta},\dfrac{b_2}{\beta}\right),\\
   & \lim_{N\rightarrow\infty} b_n\left(\dfrac{b_1}{N},\dfrac{b_2}{N},N\right), &&
        \lim_{N\rightarrow\infty} c_n\left(\dfrac{b_1}{N},\dfrac{b_2}{N},N\right), &&\lim_{N\rightarrow\infty} d_n\left(\dfrac{b_1}{N},\dfrac{b_2}{N},N\right).
\end{align*}
These  coefficients  $b_n(b_1,b_2)$, $c_n(b_1,b_2)$ and $d_n(b_1,b_2)$, are, cf. \cite[\S 4.1]{Arvesu}
\begin{align}\label{recurrenciaCharliermultiple}
\left\{ \begin{aligned}
 	   b_{2m}&=2m+b_1,&    b_{2m+1}&=2m+1+b_2,\\
    c_{2m}&=m(b_1+b_2), &
    c_{2m+1}&=(m+1)b_1+mb_2,\\
    d_{2m}&=mb_1(b_1-b_2), &
    d_{2m+1}&=mb_2(b_2-b_1).
 \end{aligned}\right.
\end{align}

\begin{pro}[Multiple Charlier of second kind  bidiagonal factorization]
	The entries $a_n (b_1,b_2)$ of a bidiagonal factorization \eqref{factorizacionbidiagonalmultiple} of the  multiple Charlier recursion matrix with coefficients given in \eqref{recurrenciaCharliermultiple} are as follows:
	\begin{align*}
		a^{}_{6n+1}&=b_1,&
		a^{}_{6n+4}&=b_2,\\
		a^{}_{6n+2}&=\dfrac{(n)_n(b_2-b_1)}{(n+1)_n}
		\dfrac{\pFq{1}{1}{-n}{-2n+1}{b_1-b_2}}{\pFq{1}{1}{-n}{-2n}{b_1-b_2}},
		&a^{}_{6n+5}&= \dfrac{(n+1)(b_1-b_2)}{(2n+1)}
		\dfrac{\pFq{1}{1}{-n}{-2n}{b_1-b_2}}{\pFq{1}{1}{-n-1}{-2n-1}{b_1-b_2}},\\
		a^{}_{6n+3}&=(2n+1)\dfrac{\pFq{1}{1}{-n-1}{-2n-1}{b_1-b_2}}{\pFq{1}{1}{-n}{-2n}{b_1-b_2}},&
		a^{}_{6n+6}&=2(n+1)\dfrac{\pFq{1}{1}{-n-1}{-2n-2}{b_1-b_2}}{\pFq{1}{1}{-n-1}{-2n-1}{b_1-b_2}}.
	\end{align*}
\end{pro}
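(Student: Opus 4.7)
The plan is to obtain this factorization as a suitable limit of one of the parents of multiple Charlier in the Askey scheme, mirroring how the other descendants were handled. I would follow the limit scheme stated just before \eqref{recurrenciaCharliermultiple}, namely either
\begin{align*}
 \lim_{\beta\to\infty} a_n\!\left(\beta,\dfrac{b_1}{\beta},\dfrac{b_2}{\beta}\right)
 \qquad\text{or}\qquad
 \lim_{N\to\infty} a_n\!\left(\dfrac{b_1}{N},\dfrac{b_2}{N},N\right),
\end{align*}
starting from the Multiple Meixner of the first kind (resp. Multiple Kravchuk) bidiagonal coefficients, which are already proved. Since the recurrence coefficients themselves converge under these limits, and the bidiagonal factorization is rational in $\{b_n,c_n,d_n\}$ when $a_2$ is fixed as in \cite[Theorem 2]{stbbm0}, the Charlier coefficients must be the limit of the Meixner (or Kravchuk) ones, so it suffices to compute these limits term by term.

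First, I would dispatch the ``easy'' coefficients $a_{6n+1},a_{6n+4}$: substituting $c_i\to b_i/\beta$ in the Meixner of first kind formulas gives $(\beta+2n)(b_1/\beta)/(1-b_1/\beta)\to b_1$ and similarly $a_{6n+4}\to b_2$. Next, for the coefficients containing hypergeometric ratios, the critical step is the confluence of $_2F_1$ into $_1F_1$. In the Meixner formulas the argument of the $_2F_1$'s is $z=(c_1-c_2)/(1-c_2)=(b_1-b_2)/\beta+O(\beta^{-2})$, and the ``upper'' parameter that grows is $\beta$. Using the termwise expansion
\begin{align*}
 \pFq{2}{1}{-n,\beta}{-2n+j}{z}=\sum_{k=0}^n \frac{(-n)_k(\beta)_k}{(-2n+j)_k\,k!}\,z^k,
\end{align*}
and the elementary asymptotics $(\beta)_k z^k \to (b_1-b_2)^k$ as $\beta\to\infty$, each such $_2F_1$ converges to the confluent series
\begin{align*}
 \pFq{1}{1}{-n}{-2n+j}{b_1-b_2}.
\end{align*}
This is the key step, and it is the only nontrivial one; since the sums are finite (the $(-n)_k$ factor kills terms with $k>n$), convergence is pointwise and uniform on compacta, so no analytic subtlety arises.

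Once the hypergeometric confluence is in place, the rest is bookkeeping of the prefactors. For $a_{6n+2}$ the rational prefactor is $(n)_n(\beta+2n)(c_2-c_1)/\bigl((n+1)_n(1-c_1)(1-c_2)\bigr)$; after the substitution and passing to the limit the factor $(1-c_1)(1-c_2)\to 1$ and $(\beta+2n)(c_2-c_1)\to b_2-b_1$, giving exactly the announced coefficient. The same kind of cancellation handles $a_{6n+5}, a_{6n+3}, a_{6n+6}$, where each $(1-c_i)^{-1}\to 1$, matching the statement. I would then independently cross-check, to confirm correctness, by running the Kravchuk limit $\lim_{N\to\infty}a_n(b_1/N,b_2/N,N)$; the same confluence mechanism applies, with argument $(p_2-p_1)/(1-p_1)\to 0$ and parameter $-N\to-\infty$ playing the role of $\beta$, and must produce the same $_1F_1$'s.

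The main obstacle I anticipate is not conceptual but notational: one has to keep track of carefully which factors in the Meixner prefactors come from $(1-c_i)$ denominators versus from the $(c_2-c_1)$ numerators, to avoid spurious $0\cdot\infty$ indeterminacies. These are all of the same nature as in the Meixner-to-Laguerre I and Jacobi--Piñeiro-to-Laguerre I limits carried out above, so the structure of the verification is routine; no new ideas beyond confluence and careful cancellation are needed.
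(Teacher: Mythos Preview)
Your approach is correct and is exactly the one the paper uses: the paper's proof consists of the single line that the Charlier coefficients are obtained from the Meixner first kind (or Kravchuk) ones via $\lim_{\beta\to\infty}a_n(\beta,b_1/\beta,b_2/\beta)$ or $\lim_{N\to\infty}a_n(b_1/N,b_2/N,N)$. Your elaboration of the confluence ${}_2F_1\to{}_1F_1$ and the prefactor bookkeeping simply fills in details the paper leaves implicit.
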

\begin{proof}
	
	The  coefficients in \eqref{factorizacionbidiagonalmultiple} can be obtained from Meixner  of first  kind or Kravchuk
	through
	$ \lim\limits_{\beta\rightarrow\infty}a_n\left(\beta,\frac{b_1}{\beta},\frac{b_2}{\beta}\right)$ and $\lim\limits_{N\rightarrow\infty}a_n\left(\frac{b_1}{N},\frac{b_2}{N},N\right)$.
\end{proof}

These ones can never be all positive because of the factor $(b_2-b_1)$.

\section*{Conclusions and outlook}

This paper explores a factorization using bidiagonal matrices of the recurrence matrix of Hahn multiple orthogonal polynomials. These factorizations naturally arise from the theory of multiple orthogonal polynomials. Specifically, there is a natural choice of the initial parameter for the type I multiple orthogonal polynomials, which leads to this bidiagonal factorization. With this parameter choice, we obtain closed-form expressions for these factorizations.

The fact that the recurrence matrix becomes a nonnegative matrix in a specific region of the parameter space makes these positive bidiagonal factorizations (PBFs) relevant for finding associated Markov chains. These Markov chains have stochastic matrices that can be factorized in terms of stochastic matrices describing pure birth or pure death Markov chains.

In this work, tetradiagonal Hessenberg matrices are presented, but there is potential to go further by considering more than two measures for the multiple orthogonal polynomials, thus obtaining more general banded Hessenberg matrices.

For future research, we can explore the connection of this bidiagonal factorization to the theory of mixed orthogonal polynomials. In this case, we would consider banded matrices that represent their recurrence relations.

\section*{Acknowledgments}

AB acknowledges the Centro de Matemática da Universidade de Coimbra, UIDB/00324/2020, which is funded by the Portuguese Government through FCT/MECS.

JEFD acknowledges the CIDMA Center for Research and Development in Mathematics and Applications (University of Aveiro) and the Portuguese Foundation for Science and Technology (FCT) for their support within projects UIDB/04106/2020 and UIDP/04106/2020, as well as [PID2021-122154NB-I00], titled ``Ortogonalidad y Aproximación con Aplicaciones en Machine Learning y Teoría de la Probabilidad.'' Additionally, he acknowledges the PhD contract UI/BD/152576/2022 from FCT Portugal.

AF acknowledges the CIDMA Center for Research and Development in Mathematics and Applications (University of Aveiro) and the Portuguese Foundation for Science and Technology (FCT) for their support within projects UIDB/04106/2020 and UIDP/04106/2020.

MM acknowledges the Spanish ``Agencia Estatal de Investigación'' research projects [PGC2018-096504-B-C33], titled ``Ortogonalidad y Aproximación: Teoría y Aplicaciones en Física Matemática,'' and [PID2021-122154NB-I00], titled ``Ortogonalidad y Aproximación con Aplicaciones en Machine Learning y Teoría de la Probabilidad.''

\section*{Declarations}

\begin{enumerate}
	\item \textbf{Conflict of interest:} The authors declare no conflict of interest.
	\item \textbf{Ethical approval:} Not applicable.
	\item \textbf{Contributions:} All the authors have contribute equally.
	\item \textbf{Generative AI and AI-assisted technologies in the writing process:}
	During the preparation of this work the authors used ChatGPT  in order to improve English grammar, syntax, spelling and wording. After using this tool/service, the authors reviewed and edited the content as needed and take full responsibility for the content of the publication.
	\item \textbf{Data availability:} This paper has associated data. There are is Mathematica notebook: 
\begin{itemize}
	\item 	 \texttt{BidiagonalFactorizationMultipleOrthogonalPolyomials.nb}.
\end{itemize}
that has been uploaded to the  \hyperref{https://notebookarchive.org}{}{}{Mathematica Notebook Archive} and to  \hyperref{https://github.com/ManuelManas/Bidiagonal_factorization/blob/main/}{}{}{GitHub}.
\end{enumerate}

\end{document}